\newlength{\defbaselineskip}
\newcommand{\setlinespacing}[1]%
           {\setlength{\baselineskip}{#1 \defbaselineskip}}
\theoremstyle{plain}
\newtheorem{thm}{Theorem}[section]
\newtheorem{cor}[thm]{Corollary}
\newtheorem{lem}[thm]{Lemma}
\newtheorem{prop}[thm]{Proposition}
\newtheorem{exam}[thm]{Example}
\newtheorem{rem}[thm]{Remark}
\newcommand{\C}{\mathbb{C}}
\makeatletter\@addtoreset{equation}{section} \makeatother
\begin{document}
\title {A tuple of multiplication operators defined by twisted holomorphic proper maps}
\author{Pan Ma \quad  Hansong Huang}
\date{}
 \maketitle \noindent\textbf{Abstract:}
  This paper mainly concerns the von Neumann algebras
  induced by a tuple of multiplication operators on Bergman spaces
which arise essentially from    holomorphic proper maps  over higher
dimensional domains.
 We study the structures and abelian properties of the related von Neumann algebras,
 and  in interesting cases   they turns out to be tightly related
 to a Riemann manifold.
There is a close interplay between operator theory, geometry and
complex  analysis.   Many   examples are presented.

 \vskip 0.1in \noindent \emph{Keywords:}
   von Neumann algebra; holomorphic proper map;  local solution

\vskip 0.1in \noindent\emph{2000 AMS Subject Classification:}  47A13; 32H35.

\section{Introduction}
%

~~~~ In this paper, $\mathbb{D}$ denotes the unit disk in the complex plane
$\mathbb{C}$. 
Let $\Omega$   denote a bounded domain in the complex space $\mathbb{C}^d$, and
$dA$ be the Lebesgue measure on $\Omega$.  The Bergman space $L_a^2(\Omega)$
is the Hilbert space consisting of  all  holomorphic functions over $\Omega$ which are square
 integrable with respect to the Lebesgue measure $dA$.
 For a holomorphic map $F:\Omega\to \C^d,$
  denote by $J F $   the determinant of the Jacobian of $F.$

For a bounded holomorphic function $ \phi  $ over   $\Omega$,
 let $M_{\phi ,\Omega}$ denote the multiplication operator with the symbol $\phi $   on  $L_a^2(\Omega)$, given by
$$M_{\phi ,\Omega} f=\phi  f, \, f\in L_a^2(\Omega).$$
In general, for a tuple $\Phi =\{\phi_j:1\leq j \leq n\}$, let $\{M_{\Phi  ,\Omega}\}'$
 denote the commutant of $\{M_{\phi_j }:1\leq j \leq n\}$,
 consisting of all bounded operators commuting with each operator $ M_{\phi_j,\Omega} (1\leq j \leq n )$.
  Here, we emphasize that  $M_{\Phi  ,\Omega}$ denotes a family
of multiplication operators rather than a single vector-valued multiplication operator.
Let $\mathcal{V}^*(\Phi ,\Omega) $ denote the von Neumann algebra
 $$\{ M_{\phi_j},\,  M_{\phi_j }^*:1\leq j \leq n \}',$$ which consists of all
 bounded operators on $L_a^2(\Omega)$ commuting with both $ M_{\phi_j }$ and $M_{\phi_j }^*$ for each $j$.
 It is   known that there is a close connection between orthogonal projections in $\mathcal{V}^*(\Phi ,\Omega) $ and
all joint reducing subspaces of $\{ M_{\phi_j }  :1\leq j \leq n
\}$. Precisely, the range of an orthogonal projection  in
$\mathcal{V}^*(\Phi ,\Omega) $ is exactly an joint reducing
subspaces of $\{ M_{\phi_j }  :1\leq j \leq n \}$,  and vice versa.
 We say that $\mathcal{V}^*(\Phi ,\Omega) $ is  trivial
 if $\mathcal{V}^*(\Phi ,\Omega) =\mathbb{C}I$.  This is equivalent to that
  $\{ M_{\phi_j }  :1\leq j \leq n \}$ has no nonzero joint reducing subspace
   other than  the whole space.
In single-variable case,  investigations on commutants and reducing subspaces of multiplication operators and
   von Neumann algebras induced  by those operators
  have been done in \cite{Cow1,Cow2,Cow3,CW,DPW,DSZ,GH1,GH2,GH3,GH4,SZZ,T1,T2}.
For  multi-variable case, one can refer to  \cite{DH,GW,HZ,Ti,WDH}.

This paper   consider a class of tuples $\Phi$ of holomorphic
functions and the von Neumann related to the corresponding
multiplication operators. It turns out that techniques of geometry,
complex analysis and operator theory are intrinsic in the study.

Suppose  $\Omega_1$ and $\Omega_2$ are two bounded
domains in $\mathbb{C}$, $\phi $ and  $\psi$ are holomorphic on
$ \Omega_1 $ and $ \Omega_2 $, respectively. Define
$$ \Upsilon_{\phi ,\psi}(z_1,z_2)=(\phi (z_1)+\psi_2(z_2),\phi (z_1)^2+ \psi(z_2)^2),\, z_1\in \Omega_1, z_2\in \Omega_2.$$
Define
$$\mathcal{S}_{\phi ,\psi}=\{(z,w)\in \Omega_1\times\Omega_2:
 z\not\in \phi^{-1}\circ \psi\Big(Z(\psi')\Big) ,\, z\not\in  \psi^{-1}\circ \phi\Big(Z( \phi')\Big)\} .$$
 Under some situations, $\mathcal{S}_{\phi,\psi}$ turns out to be a Riemann surface, and then
let $n(\phi ,\psi)$  denote the number of components of
$\mathcal{S}_{\phi,\psi}$.
 We have the dimension formula for $\mathcal{V}^*( \Upsilon_{\phi_1,\phi_2},\Omega_1\times \Omega_2)$.
\begin{thm} Suppose that $\phi_1$ and $\phi_2$ are holomorphic proper maps over
 bounded
domains $\Omega_1$ and $\Omega_2$ in $\mathbb{C}$, respectively. If $\phi_1(\Omega_1)=\phi_2(\Omega_2)$, then
$$\dim \mathcal{V}^*( \Upsilon_{\phi_1,\phi_2},\Omega_1\times \Omega_2)
    =   n(\phi_1,\phi_1) n(\phi_2,\phi_2)+n(\phi_1,\phi_2)^2
     .$$ In this case, $ \mathcal{V}^*( \Upsilon_{\phi_1,\phi_2},\Omega_1\times \Omega_2) $ is not
$*$-isomorphic to the  von Neumann algebra   \label{dim}
$ \mathcal{V}^*(\phi_1(z_1), \phi_2(z_2),\Omega_1\times
\Omega_2)= \mathcal{V}^*(\phi_1,\Omega_1 )\otimes \mathcal{V}^*(\phi_2,
\Omega_2) $.
\end{thm}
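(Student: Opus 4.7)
The plan is to reduce the dimension computation to a counting problem on a suitable Riemann surface built from the fiber structure of $\Upsilon_{\phi_1,\phi_2}$. Writing $a=\phi_1(z_1)$ and $b=\phi_2(z_2)$, the two coordinates $a+b$ and $a^2+b^2$, together with their adjoints, generate the same unital $*$-algebra of multiplication operators as any pair of algebraically independent symmetric functions of $a,b$ (for instance $a+b$ and $ab$, since $2ab=(a+b)^2-(a^2+b^2)$). Consequently a bounded operator lies in $\mathcal{V}^*(\Upsilon_{\phi_1,\phi_2},\Omega_1\times\Omega_2)$ if and only if it commutes with $M_{\sigma}$ and $M_{\sigma}^*$ for every symmetric polynomial $\sigma\in\mathbb{C}[x,y]$ evaluated at $(\phi_1(z_1),\phi_2(z_2))$. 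Geometrically, the relevant equivalence on $\Omega_1\times\Omega_2$ is that generated by requiring the unordered pair $\{\phi_1(z_1),\phi_2(z_2)\}$ to be preserved.

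Next, following the general principle (established earlier in the paper for tuples of multiplication operators induced by proper holomorphic maps) that the minimal projections of $\mathcal{V}^*$ are parametrized by the connected components of the noncritical part of the fiber product
$$X=\{((z_1,z_2),(z_1',z_2'))\in(\Omega_1\times\Omega_2)^2:\Upsilon_{\phi_1,\phi_2}(z_1,z_2)=\Upsilon_{\phi_1,\phi_2}(z_1',z_2')\},$$
I would decompose $X$ according to whether the map preserves or swaps the assignment of $\phi_1$- and $\phi_2$-values. The preserving piece, where $\phi_1(z_1)=\phi_1(z_1')$ and $\phi_2(z_2)=\phi_2(z_2')$, is naturally biholomorphic to $\mathcal{S}_{\phi_1,\phi_1}\times\mathcal{S}_{\phi_2,\phi_2}$ and hence contributes $n(\phi_1,\phi_1)\,n(\phi_2,\phi_2)$ components. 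The swapping piece, where $\phi_1(z_1)=\phi_2(z_2')$ and $\phi_2(z_2)=\phi_1(z_1')$, is biholomorphic to $\mathcal{S}_{\phi_1,\phi_2}\times\mathcal{S}_{\phi_2,\phi_1}$; the involution $(z,w)\mapsto(w,z)$ identifies $\mathcal{S}_{\phi_1,\phi_2}$ with $\mathcal{S}_{\phi_2,\phi_1}$, so this contributes $n(\phi_1,\phi_2)^2$ components. On the noncritical locus (where $\phi_1(z_1)\ne\phi_2(z_2)$) the two pieces are disjoint and together exhaust $X$; summing yields the claimed dimension.

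The non-isomorphism with $\mathcal{V}^*(\phi_1,\Omega_1)\otimes\mathcal{V}^*(\phi_2,\Omega_2)$ then falls out of a dimension comparison. The tensor product has dimension $n(\phi_1,\phi_1)\,n(\phi_2,\phi_2)$, whereas the hypothesis $\phi_1(\Omega_1)=\phi_2(\Omega_2)$ guarantees that the equation $\phi_1(z)=\phi_2(w)$ has solutions, so $\mathcal{S}_{\phi_1,\phi_2}$ is nonempty and $n(\phi_1,\phi_2)\ge 1$. Thus the dimension computed above is strictly larger. Since $*$-isomorphic finite-dimensional von Neumann algebras (being direct sums of matrix algebras) must agree in total vector-space dimension, no such isomorphism can exist.

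The hard part will be making the bijection between connected components of $X$ and minimal projections of $\mathcal{V}^*$ rigorous in this joint-symbol setting: one must show that each component produces a reducing subspace via local holomorphic solutions of $\Upsilon_{\phi_1,\phi_2}(z_1',z_2')=\Upsilon_{\phi_1,\phi_2}(z_1,z_2)$, that these subspaces are mutually orthogonal and span all of $L_a^2(\Omega_1\times\Omega_2)$, and that no operator in $\mathcal{V}^*$ further decomposes them. The most delicate point is verifying that the preserving and swapping pieces do not merge across the branch set, for instance along the locus $\phi_1(z_1)=\phi_2(z_2)$ or where critical values of $\phi_1,\phi_2$ coincide; this is precisely where I would rely on the ``local solution'' technique developed earlier in the paper to carry out the counting cleanly.
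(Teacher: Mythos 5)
Your proposal follows essentially the same route as the paper: reduce to the symmetric-function observation that $(a+b,a^2+b^2)$ and $(a+b,ab)$ generate the same data, split the fiber set into a ``preserving'' piece biholomorphic to $\mathcal{S}_{\phi_1,\phi_1}\times\mathcal{S}_{\phi_2,\phi_2}$ and a ``swapping'' piece biholomorphic to $\mathcal{S}_{\phi_1,\phi_2}\times\mathcal{S}_{\phi_2,\phi_1}$, count components (using $n(\phi_1,\phi_2)=n(\phi_2,\phi_1)$), and obtain the non-isomorphism by comparing dimensions, with $n(\phi_1,\phi_2)\ge 1$ forced by $\phi_1(\Omega_1)=\phi_2(\Omega_2)$. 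One small imprecision: the relevant structural result (the paper's Theorem~\ref{proper}) says the \emph{dimension} of $\mathcal{V}^*$ equals the number of components of $\mathcal{S}_\Phi$ (via the basis $\mathcal{E}_{[\rho]}$), not that the minimal projections are in bijection with components; since your argument only compares dimensions, this does not affect the conclusion.
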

\noindent The condition $\phi_1(\Omega_1)=\phi_2(\Omega_2)$ can not
be weaken to
$\overline{\phi_1(\Omega_1)}=\overline{\phi_2(\Omega_2)}$, as
illustrated by Example \ref{ex}.

In Theorem \ref{dim}, if   both $\phi_1$ and $\phi_2$ are finite
Blaschke products, then  we get  something interesting. The following  shows that the
abelian property of  $ \mathcal{V}^*( \Upsilon_{B_1,B_2},\mathbb{D}^2) $
  heavily rests on  the connectedness of  the   Riemann surface $\mathcal{S}_{B_1,B_2}$.
 \begin{thm} Write $\Upsilon_{B_1,B_2} (z)=(B_1(z_1)+B_2(z_2),B_1(z_1)^2+ B_2(z_2)^2 ),$ where both $B_1$
 and $B_2$ are finite Blaschke products.
Then  \label{bla}
  $ \mathcal{V}^*(\Upsilon_{B_1,B_2},\mathbb{D}^2) $ is abelian if and only if  $\mathcal{S}_{B_1,B_2}$
is connected.
\end{thm}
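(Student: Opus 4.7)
The approach centers on upgrading the dimension identity of Theorem \ref{dim} into an explicit Wedderburn decomposition. Because $\mathcal{V}^*(\Upsilon_{B_1,B_2},\mathbb{D}^2)$ is finite-dimensional, it splits as
$$\mathcal{V}^*(\Upsilon_{B_1,B_2},\mathbb{D}^2) \;\cong\; \bigoplus_{i} M_{k_i}(\mathbb{C}), \qquad \sum_i k_i^2 \;=\; n(B_1,B_1)n(B_2,B_2)+n(B_1,B_2)^2.$$
The plan is to pin the right-hand side down to
$$\mathbb{C}^{\,n(B_1,B_1)n(B_2,B_2)}\;\oplus\;M_{n(B_1,B_2)}(\mathbb{C}),$$
from which the conclusion is immediate: the whole algebra is abelian if and only if the matrix block is $1\times 1$, i.e.\ $n(B_1,B_2)=1$, i.e.\ $\mathcal{S}_{B_1,B_2}$ is connected.

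First I would exhibit a distinguished central projection $P\in\mathcal{V}^*(\Upsilon_{B_1,B_2},\mathbb{D}^2)$ reflecting the factorization $\Upsilon_{B_1,B_2}=p\circ(B_1\times B_2)$ with $p(a,b)=(a+b,a^2+b^2)$. Since $p$ is invariant under the swap $a\leftrightarrow b$, a generic fiber of $\Upsilon_{B_1,B_2}$ breaks into two halves, one where $B_1(z_1)=a,\,B_2(z_2)=b$ and one where they are interchanged, joined on the diagonal locus $\{B_1(z_1)=B_2(z_2)\}$. The projection $P$ should correspond to the part of the fiber bundle on which this swap acts trivially, i.e.\ to the diagonal portion coming from $\mathcal{S}_{B_1,B_1}$ and $\mathcal{S}_{B_2,B_2}$.

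On the range of $P$ the operators in $\mathcal{V}^*$ preserve each coordinate separately, so $P\mathcal{V}^*P$ equals the subalgebra $\mathcal{V}^*(B_1,\mathbb{D})\otimes\mathcal{V}^*(B_2,\mathbb{D})$. Established results for a single finite Blaschke product give that each $\mathcal{V}^*(B_i,\mathbb{D})$ is commutative of dimension $n(B_i,B_i)$; so $P\mathcal{V}^*P$ is abelian of dimension $n(B_1,B_1)n(B_2,B_2)$ as required, and contributes that many minimal central projections.

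The main obstacle, and the step doing the real work, is to show that the complementary block $(I-P)\mathcal{V}^*(I-P)$ is a \emph{full} matrix algebra of size $n(B_1,B_2)$ rather than a commutative $\mathbb{C}^{n(B_1,B_2)^2}$ (which would give the same dimension count). For this I would use local holomorphic sections of $\mathcal{S}_{B_1,B_2}\to\mathbb{D}$: over any simply connected region away from the critical values, the equation $B_1(z)=B_2(w)$ has $n(B_1,B_2)$ distinct branches $w=\rho_{ij}(z)$, indexed by components of $\mathcal{S}_{B_1,B_2}$. Pairs of such branches produce sheet-exchange operators $U_{ij}$, and concatenating local solutions should yield matrix-unit relations $U_{ij}U_{k\ell}=\delta_{jk}U_{i\ell}$ (checked by verifying they commute with multiplication by $B_1(z_1)+B_2(z_2)$ and $B_1(z_1)^2+B_2(z_2)^2$). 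When $n(B_1,B_2)\geq 2$ these partial isometries are manifestly non-commuting, and when $n(B_1,B_2)=1$ the block collapses to $\mathbb{C}$. Combined with the first half, this proves the equivalence.
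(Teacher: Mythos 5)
Your plan hinges on the claimed Wedderburn decomposition
$$\mathcal{V}^*(\Upsilon_{B_1,B_2},\mathbb{D}^2)\cong\mathbb{C}^{\,n(B_1,B_1)n(B_2,B_2)}\oplus M_{n(B_1,B_2)}(\mathbb{C}),$$
and that claim is false. Take $B_1=B_2=z^3$. Every local inverse of $\Upsilon_{z^3,z^3}$ on $\mathbb{D}^2$ is globally defined, so the equivalence classes are singletons and the $\mathcal{E}_\rho$ realize the (opposite of the) group algebra of the deck group $G=(\mathbb{Z}/3)^2\rtimes\mathbb{Z}/2=\mathbb{Z}/3\wr S_2$. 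Here $n(B_1,B_1)=n(B_2,B_2)=n(B_1,B_2)=3$, and indeed $\dim\mathcal{V}^*=9+9=18=|G|$. But $G$ has six $1$-dimensional and three $2$-dimensional irreducible representations, so
$$\mathcal{V}^*(\Upsilon_{z^3,z^3},\mathbb{D}^2)\cong\mathbb{C}[G]\cong\mathbb{C}^6\oplus M_2(\mathbb{C})\oplus M_2(\mathbb{C})\oplus M_2(\mathbb{C}),$$
not $\mathbb{C}^9\oplus M_3(\mathbb{C})$. The center has dimension $9$, not $10$, and the largest abelian corner cut out by a \emph{central} projection has dimension $6$, not $9$; so the distinguished central projection $P$ you describe does not exist, the claimed identification $P\mathcal{V}^*P\cong\mathcal{V}^*(B_1)\otimes\mathcal{V}^*(B_2)$ fails, and the ``sheet-exchange'' operators cannot satisfy the matrix-unit relations $U_{ij}U_{k\ell}=\delta_{jk}U_{i\ell}$ you propose (if they did, the corner $(I-P)\mathcal{V}^*(I-P)$ would be a single $M_3$, contradicting the actual block structure). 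The dimension identity $\sum_i k_i^2 = n(B_1,B_1)n(B_2,B_2)+n(B_1,B_2)^2$ simply does not pin down the $k_i$.

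The paper avoids any structural decomposition. After reducing via Theorem~\ref{ormain} to the generators $\mathcal{E}_{[\rho]}$, it works directly with the composition rule $[\tau_1]\circ[\tau_2]$ for equivalence classes of local inverses, which come in two shapes: $(\rho(z_1),\sigma(z_2))$ with $\rho\in B_1^{-1}\circ B_1$, $\sigma\in B_2^{-1}\circ B_2$, and the ``twisted'' $(\zeta(z_2),\eta(z_1))$ with $\zeta\in B_1^{-1}\circ B_2$, $\eta\in B_2^{-1}\circ B_1$. The heart of the argument is boundary analysis on $\mathbb{T}$: since a finite Blaschke product is conformal on $\mathbb{T}$, the branches of local solutions for $\mathcal{S}_{B_1,B_2}$ continue along the boundary circle, and comparing their tracks at distinguished boundary points shows that disconnectedness of $\mathcal{S}_{B_1,B_2}$ forces some $[\rho]\circ[\zeta]\neq[\zeta]\circ[\sigma]$, whereas connectedness (together with the known abelianness of each $\mathcal{V}^*(B_i,\mathbb{D})$ from Douglas--Putinar--Wang) forces all compositions to commute. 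If you want to rescue a structural argument, you would need to identify the true block decomposition — which, as the $z^3$ example shows, depends on the monodromy group and is not a single matrix summand.
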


As a special case, the von Neumann algebra $\mathcal{V}^*(z_1^k+z_2^l, z_1^{2k} z_2^{2l},
\mathbb{D}^2)$ is abelian if and only if  $GCD(k,l)=1 $ ( Example
\ref{kl}). Besides, if more than two
  finite Blaschke products are involved, a generalization of Theorem \ref{bla} fails,
see Example \ref{tri}.

For one-variable holomorphic functions $f$ and $g$, let
 $$ \Upsilon_{f,g}=( f(z_1)+g(z_2),f(z_1)^2+ g(z_2) ^2).$$  By contrast with Theorem \ref{dim}, the following is of interest.
\begin{prop}  \label{ffgg}Suppose that  $f$ and $g$ are holomorphic maps over
$\overline{\mathbb{D}}.$  If $\overline{f(\mathbb{D})} \neq
\overline{g (\mathbb{D})}$, then $\mathcal{V}^*( \Upsilon_{f,g},
\mathbb{D}^2)  $ is $*$-isomorphic to
$\mathcal{V}^*(f,\mathbb{D})\otimes \mathcal{V}^*(g,\mathbb{D}).$
Furthermore, $\mathcal{V}^*(\Upsilon_{f,g}, \mathbb{D}^2)  $  is
abelian.
\end{prop}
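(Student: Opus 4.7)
The plan is to prove the stronger statement $\mathcal{V}^*(\Upsilon_{f,g}, \mathbb{D}^2) = \mathcal{V}^*(f(z_1), g(z_2), \mathbb{D}^2)$; since the tensor identification $L_a^2(\mathbb{D}^2) = L_a^2(\mathbb{D}) \otimes L_a^2(\mathbb{D})$ factors the right-hand side as $\mathcal{V}^*(f,\mathbb{D}) \otimes \mathcal{V}^*(g,\mathbb{D})$ (as recorded in Theorem~\ref{dim}), this equality will deliver the $*$-isomorphism; combined with the abelianness of each tensor factor (a known feature of commutants of holomorphic multipliers on the Bergman disc), it will also yield the abelian conclusion. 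The inclusion $\mathcal{V}^*(f(z_1), g(z_2), \mathbb{D}^2) \subset \mathcal{V}^*(\Upsilon_{f,g}, \mathbb{D}^2)$ is immediate because $f(z_1)+g(z_2)$ and $f(z_1)^2+g(z_2)^2$ are polynomials in $f(z_1)$ and $g(z_2)$.

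For the reverse inclusion, fix $T \in \mathcal{V}^*(\Upsilon_{f,g}, \mathbb{D}^2)$ and work with the Bergman reproducing kernels $K_w = K_{w_1} \otimes K_{w_2}$, $w = (w_1, w_2) \in \mathbb{D}^2$. The classical identity $M_\phi^* K_w = \overline{\phi(w)} K_w$ makes $K_w$ a joint eigenvector of $M_{f(z_1)+g(z_2)}^*$ and $M_{f(z_1)^2+g(z_2)^2}^*$ at $(\overline{u_0}, \overline{v_0})$, where $u_0 = f(w_1)+g(w_2)$ and $v_0 = f(w_1)^2+g(w_2)^2$. Since $T$ commutes with these adjoints, $T K_w$ lies in the joint eigenspace $\mathcal{E}_w := \ker(M_{f(z_1)+g(z_2)}^* - \overline{u_0}) \cap \ker(M_{f(z_1)^2+g(z_2)^2}^* - \overline{v_0})$.

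The central claim is that, under the hypothesis, $\mathcal{E}_w \subset \ker(M_{f(z_1)}^* - \overline{f(w_1)})$ whenever $f(w_1) \notin \overline{g(\mathbb{D})}$. Possibly swapping $f$ and $g$, fix an open non-empty $V \subset \mathbb{D}$ with $f(V) \cap \overline{g(\mathbb{D})} = \emptyset$, pick $w_1 \in V$, and set $\alpha = f(w_1)$, $\beta = g(w_2)$, $X = M_{f(z_1)}^* = M_f^* \otimes I$, $Y = M_{g(z_2)}^* = I \otimes M_g^*$. For $h \in \mathcal{E}_w$ the relations $(X+Y)h = \overline{u_0}\,h$ and $(X^2+Y^2)h = \overline{v_0}\,h$, together with the commutativity $XY = YX$, yield $XYh = \overline{\alpha\beta}\,h$, and a direct expansion then gives $(X - \bar\alpha)(Y - \bar\alpha)h = 0$. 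Because $\sigma(M_g, L_a^2(\mathbb{D})) = \overline{g(\mathbb{D})}$ and $\alpha \notin \overline{g(\mathbb{D})}$, one has $\bar\alpha \notin \sigma(Y)$, so $Y - \bar\alpha$ is invertible on $L_a^2(\mathbb{D}^2)$; commutativity of $X, Y$ then forces $(X - \bar\alpha) h = 0$.

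Applied to $h = T K_w$, this shows $[X, T]K_w = 0$ for every $w$ with $w_1 \in V$. Since $V$ is open and non-empty, the identity theorem gives density of the span of $\{K_w : w_1 \in V,\, w_2 \in \mathbb{D}\}$ in $L_a^2(\mathbb{D}^2)$, so $[X, T] = 0$. Combined with $[X+Y, T] = 0$ this yields $[Y, T] = 0$; running the same argument on $T^* \in \mathcal{V}^*(\Upsilon_{f,g}, \mathbb{D}^2)$ and taking adjoints yields $[M_{f(z_1)}, T] = [M_{g(z_2)}, T] = 0$ as well. Hence $T \in \mathcal{V}^*(f(z_1), g(z_2), \mathbb{D}^2)$, completing the equality. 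The heart of the argument, and the main obstacle, is the combination of the algebraic identity $(X-\bar\alpha)(Y-\bar\alpha)h=0$ with the spectral invertibility of $Y - \bar\alpha$; both rely crucially on $\alpha \notin \overline{g(\mathbb{D})}$, which is exactly what the hypothesis delivers and precisely what fails in the equal-image regime of Theorem~\ref{dim}.
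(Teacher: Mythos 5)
Your proof is correct, but it takes a genuinely different route from the paper. The paper obtains the $*$-isomorphism by ``following the proof of Theorem~\ref{32},'' i.e.\ by running the admissible-local-inverse machinery of Theorem~\ref{ormain}: one lists the candidate compatible equations for $\Upsilon_{f,g}(w)=\Upsilon_{f,g}(z)$, rules out the swapped case $f(w_1)=g(z_2),\,g(w_2)=f(z_1)$ by observing via (\ref{subset}) that an admissible solution there would force $\overline{f(\mathbb{D})}=\overline{g(\mathbb{D})}$, and then matches the surviving generators $\mathcal{E}_{[\rho(z_1),\sigma(z_2)]}\leftrightarrow\mathcal{E}_{[\rho]}\otimes\mathcal{E}_{[\sigma]}$. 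You instead prove the stronger statement $\mathcal{V}^*(\Upsilon_{f,g},\mathbb{D}^2)=\mathcal{V}^*(f(z_1),g(z_2),\mathbb{D}^2)$ directly: reproducing kernels are joint eigenvectors of the adjoints, the identity $(X-\bar\alpha)(Y-\bar\alpha)h=0$ falls out of the defining relations, and the hypothesis $\alpha=f(w_1)\notin\overline{g(\mathbb{D})}=\sigma(M_g)$ makes $Y-\bar\alpha$ invertible, so $TK_w$ is forced into the $\bar\alpha$-eigenspace of $M_{f(z_1)}^*$; the identity theorem then upgrades $[X,T]K_w=0$ on an open slab to $[X,T]=0$, and passing to $T^*$ finishes. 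This is more elementary and self-contained than the paper's route (no appeal to the Remmert/Hausdorff-measure/analytic-continuation package behind Theorem~\ref{ormain}), and it makes transparent that the hypothesis is precisely a spectral condition. What it costs is that the tensor-factorization step $\mathcal{V}^*(f(z_1),g(z_2),\mathbb{D}^2)=\mathcal{V}^*(f,\mathbb{D})\otimes\mathcal{V}^*(g,\mathbb{D})$ needs its own justification: Theorem~\ref{dim} records it only for proper $\phi_1,\phi_2$, so for general $f,g$ holomorphic on $\overline{\mathbb{D}}$ you should instead cite the commutation theorem for von Neumann tensor products. Also, the abelianness of $\mathcal{V}^*(f,\mathbb{D})$ for $f$ holomorphic on $\overline{\mathbb{D}}$ is not a triviality; the paper derives it from Thomson's theorem (reducing to a finite Blaschke product $B_f$) together with \cite[Theorem~1.1]{DPW}, and you should make that explicit rather than calling it ``a known feature.''
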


To   generalize the above result, we introduce a notion called local solution.
Throughout this paper, without   other explanation   we always assume that \emph{ $\Omega$ is a
bounded domain  in $\mathbb{C}^d$ such that the interior of
$\overline{\Omega}$ equals $\Omega$}.   For two holomorphic maps
$\Phi$ and $\Psi$ from $\Omega$ to $\mathbb{C}^d$, if there is   a
subdomain $\Delta$ of $\Omega $
 and   a holomorphic function
 $\rho$ over $\Delta$ such that
 $$\Psi(\rho(z))=\Phi(z),\, z\in \Delta,$$
 then $\rho$ is called  \emph{a  local solution}. If $\Phi$ and $\Psi$ are holomorphic proper maps over $\Omega$, one
can define a  complex manifold $\mathcal{S}_{\Phi,\Psi},$ which
equals $$  \{(z,w)\in \Omega : \Psi(w)=\Phi(z)\}$$ minus some small subset.
We  also generalize the definition of a holomorphic map
$\Upsilon_{\Phi,\Psi}$.
 For  convenience, let $\Omega$ be a domain in $\mathbb{C}^2$ and
$$\Phi(z_1,z_2)=(\phi_1(z_1,z_2),\phi_2(z_1,z_2) ),\, \Psi(z_1,z_2)=(\psi_1(z_1,z_2),\psi_2(z_1,z_2) ),$$ where $ (z_1,z_2)\in \Omega.$
Write  $P(z)=( \sum_{j=1}^4 z_j, \sum_{j=1 }^4z_j^2,\cdots, \sum_{j=1}^4 z_j^4),$
 and $\Upsilon_{\Phi,\Psi}$ is defined by
$$\Upsilon_{\Phi,\Psi}(z_1,z_2,z_3,z_4) = P\circ \big(\Phi(z_1,z_2), \Psi(z_3,z_4)\big),\, (z_1,z_2,z_3,z_4)\in \Omega^2, $$
called the twisted map of $\Phi$ and $\Psi.$
Then we have the following.
\begin{thm} Suppose $\Phi$ and $\Psi $  are two holomorphic proper maps over $\Omega $  such that  \label{32}
$\overline{\Phi(\Omega)}\neq \overline{\Psi(\Omega)}$, and both $\Phi$ and $\Psi $  are holomorphic on $\overline{\Omega}$.
 Assume that $\Upsilon_{\Phi,\Psi}$ has no nontrivial  compatible equation.
Then $\mathcal{V}^*(\Upsilon_{\Phi,\Psi},\Omega^2)$  is $*$-isomorphic to
 $\mathcal{V}^*( \Phi, \Omega)\otimes \mathcal{V}^*( \Psi,\Omega)$.
 \end{thm}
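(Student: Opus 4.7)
\emph{Proof proposal.} The plan is to establish the two inclusions between $\mathcal{V}^*(\Upsilon_{\Phi,\Psi},\Omega^2)$ and $\mathcal{V}^*(\Phi,\Omega)\otimes \mathcal{V}^*(\Psi,\Omega)$ separately, working throughout under the canonical identification $L_a^2(\Omega^2)\cong L_a^2(\Omega)\otimes L_a^2(\Omega)$. The inclusion $\mathcal{V}^*(\Phi,\Omega)\otimes \mathcal{V}^*(\Psi,\Omega)\subseteq \mathcal{V}^*(\Upsilon_{\Phi,\Psi},\Omega^2)$ is immediate: if $T_1\in\mathcal{V}^*(\Phi,\Omega)$ and $T_2\in\mathcal{V}^*(\Psi,\Omega)$, then $T_1\otimes T_2$ commutes with each $M_{\phi_j}\otimes I$ and with each $I\otimes M_{\psi_j}$ (and their adjoints), hence with every polynomial expression in them. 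Since each component of $\Upsilon_{\Phi,\Psi}$ is a symmetric polynomial in $(\phi_1,\phi_2,\psi_1,\psi_2)$, the inclusion follows.

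For the reverse inclusion, I would classify the local solutions of $\Upsilon_{\Phi,\Psi}$ and then invoke the local-solution-to-joint-reducing-subspace dictionary developed earlier in the paper (the same machinery that underlies Proposition \ref{ffgg}). Writing $\rho=(\sigma,\tau)\colon\Delta\to\Omega^2$ on a subdomain $\Delta\subset\Omega^2$, the identity $\Upsilon_{\Phi,\Psi}\circ\rho=\Upsilon_{\Phi,\Psi}$ is equivalent to the pointwise multiset identity $\{\phi_1(\sigma),\phi_2(\sigma),\psi_1(\tau),\psi_2(\tau)\}=\{\phi_1,\phi_2,\psi_1,\psi_2\}$, since the four power sums of four complex variables determine a $4$-multiset uniquely via Newton's identities. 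On the left, $(\phi_1(\sigma),\phi_2(\sigma))\in\Phi(\Omega)$ and $(\psi_1(\tau),\psi_2(\tau))\in\Psi(\Omega)$, and the same on the right, so the question reduces to: in how many ways can a given $4$-multiset in $\mathbb{C}$ be split into a pair lying in $\Phi(\Omega)$ and a pair lying in $\Psi(\Omega)$? The conditions $\overline{\Phi(\Omega)}\neq \overline{\Psi(\Omega)}$ and the absence of nontrivial compatible equations for $\Upsilon_{\Phi,\Psi}$ are engineered precisely to force uniqueness of this splitting on a dense open subset of $\Omega^2$, so that any local solution must respect the grouping $\{\phi_1,\phi_2\}$ against $\{\psi_1,\psi_2\}$.

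Once uniqueness of the grouping is secured, $\sigma$ depends only on $(z_1,z_2)$ and $\tau$ only on $(z_3,z_4)$, and each is a local self-solution of the corresponding proper map $\Phi$ or $\Psi$. Therefore every local solution of $\Upsilon_{\Phi,\Psi}$ has the product form $(\sigma,\tau)$, and under the dictionary the associated joint reducing projection decomposes as a tensor product of a joint reducing projection for $\Phi$ and one for $\Psi$. Summing over a maximal family of minimal projections yields $\mathcal{V}^*(\Upsilon_{\Phi,\Psi},\Omega^2)\subseteq \mathcal{V}^*(\Phi,\Omega)\otimes \mathcal{V}^*(\Psi,\Omega)$, completing the proof.

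The main obstacle I anticipate is the middle step: the images $\overline{\Phi(\Omega)},\overline{\Psi(\Omega)}\subset\mathbb{C}^2$ being unequal does not by itself imply any disjointness of component values in $\mathbb{C}$, so one must combine properness of $\Phi$ and $\Psi$ (and their continuity up to $\overline{\Omega}$) with the no-nontrivial-compatible-equation hypothesis to exclude all non-canonical regroupings of a generic $4$-multiset. Making this uniqueness-of-splitting statement precise, and then propagating the product structure from a distinguished open set to all of $\Omega^2$ by analytic continuation, is the technical heart of the argument.
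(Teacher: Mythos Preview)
Your approach is essentially the paper's: reduce $\Upsilon_{\Phi,\Psi}(w)=\Upsilon_{\Phi,\Psi}(z)$ via Newton's identities to a permutation of $(f_1,\dots,f_4)$, show that the only surviving admissible local inverses are of product type $(\rho(z_1,z_2),\sigma(z_3,z_4))$ with $\rho\in\Phi^{-1}\circ\Phi$, $\sigma\in\Psi^{-1}\circ\Psi$, and then identify $\mathcal{V}^*(\Upsilon_{\Phi,\Psi},\Omega^2)$ with $\mathcal{V}^*(\Phi,\Omega)\otimes\mathcal{V}^*(\Psi,\Omega)$ through the generator description (Theorem~\ref{ormain}).

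The ``main obstacle'' you flag is not really one, and your worry suggests you have not fully absorbed the definition of compatible equation given just above Theorem~\ref{53}. The hypothesis that $\Upsilon_{\Phi,\Psi}$ has no nontrivial compatible equation is \emph{defined} to mean that the only permutations $\pi$ of $\{1,2,3,4\}$ for which $\overline{(f_1,\dots,f_4)(\Omega^2)}=\overline{(f_{\pi(1)},\dots,f_{\pi(4)})(\Omega^2)}$ are the identity and the block swap $(1,2)\leftrightarrow(3,4)$. Since Remark~\ref{eq} (the image identity $\overline{\mathrm{Image}\,[\sigma]}=\overline{\Omega}$) shows that any \emph{admissible} local inverse forces exactly such an equality of closures, the hypothesis immediately eliminates every permutation except those two; there is nothing further to prove about ``uniqueness of splitting of a generic $4$-multiset.'' The remaining swap case is then dispatched in one line: an admissible solution of $(\Phi(w_1,w_2),\Psi(w_3,w_4))=(\Psi(z_3,z_4),\Phi(z_1,z_2))$ would, again by the image identity, force $\overline{\Phi(\Omega)\times\Psi(\Omega)}=\overline{\Psi(\Omega)\times\Phi(\Omega)}$, hence $\overline{\Phi(\Omega)}=\overline{\Psi(\Omega)}$, contradicting the hypothesis.

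Two smaller points. First, the paper does not pass through reducing projections; it uses the operators $\mathcal{E}_{[\rho]}$ directly as generators and sends $\mathcal{E}_{[(\rho,\sigma)]}\mapsto\mathcal{E}_{[\rho]}\otimes\mathcal{E}_{[\sigma]}$. Your projection language can be made to work, but it is an unnecessary detour. Second, your ``easy inclusion'' $\mathcal{V}^*(\Phi,\Omega)\otimes\mathcal{V}^*(\Psi,\Omega)\subseteq\mathcal{V}^*(\Upsilon_{\Phi,\Psi},\Omega^2)$ is correct and in fact shows that the paper's $*$-isomorphism is literally an equality of algebras on $L_a^2(\Omega^2)$; the paper states only isomorphism, so you have noticed a slight sharpening.
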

\noindent
The condition  of $\Upsilon_{\Phi,\Psi}$ having no nontrivial  compatible equation is quite  geometric,
and such a condition is easy to check under many cases, see below (\ref{comp}). In addition,
Theorem \ref{32} still holds if $\Omega$ is a domain in $\mathbb{C}^d(d\geq 1).$


This paper is arranged as follows.

 In  Section \ref{sect2} we give some preliminaries. Section
\ref{sect3} gives the definition and studies some properties of local
solution, and provides the proof of Theorem \ref{dim}.
  Section 4 mainly  establishes Theorem \ref{bla} and discusses some  related examples.
    As is shown, there is an interplay between operator theory,
    geometry and complex analysis.
     Section \ref{sect5} aims to give a generalization of these results.
    We  focus  on two twisted holomorphic proper maps and present
the proof of   Theorem \ref{32}.

\section{\label{sect2}Some preliminaries}

This section gives some preliminaries, including proper map, zero variety and local inverse,
 which will be useful later.

Let $\Omega,\Omega_0,\Omega_1$ be   domains in $ \mathbb{C}^d$.
 A holomorphic function $\Psi: \Omega_0\to \Omega_1$ is called \emph{a proper map} if
 for any sequence $\{p_n\}$($p_n\in \Omega_0$) without limit point in $\Omega_0$,
$\{\Psi(p_n)\}$ has no limit point in $\Omega_1$.
A holomorphic function $\Psi$ on $\Omega$ is called proper if
$\Psi(\Omega)$ is open and  $\Psi: \Omega \to \Psi (\Omega)$ is
proper. In particular, if $\Psi$ is holomorphic on
$\overline{\Omega}$, then $\Psi$ is proper on $\Omega$ if and only
if $\Psi(\Omega)$ is open and $$\Psi (\partial \Omega)\subseteq
\partial  \Psi(\Omega) .$$
For example, a holomorphic map $\Phi$ from $\mathbb{D}^d$ to
$\mathbb{D}^d$ is proper if and only if
 up to a permutation of coordinates $$\Phi(z)=(\phi_1(z_1),\cdots, \phi_d(z_d)),$$ where each
 $\phi_j(1\leq j \leq d)$ is a finite Blaschke product (\cite[Theorem 7.3.3]{Ru1}).

In general, a  holomorphic proper map is  open, which  follows
directly from \cite[Theorem 15.1.6]{Ru2}, see the following.
\begin{thm} \label{open} Suppose  $F: \Omega \to  \mathbb{C}^d$ is a holomorphic function
and for each $w\in \C^d$, $F^{-1}(w)$ is compact. Then $F$ is an
open map.
\end{thm}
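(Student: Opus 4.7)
The plan is to reduce the compact-fiber hypothesis to a discrete-fiber hypothesis and then prove openness pointwise via a local proper restriction followed by a branched-covering argument.

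First I would show that any compact analytic subvariety $K \subset \mathbb{C}^d$ is a finite set: each coordinate function $z_j$ is bounded on $K$, so by the maximum modulus principle applied to each irreducible component of $K$, every component is a single point. Applying this to $K = F^{-1}(w)$ — a closed analytic subvariety of $\Omega \subset \mathbb{C}^d$ — the compactness hypothesis forces every fiber of $F$ to be finite. It therefore suffices to prove that a holomorphic map $F \colon \Omega \to \mathbb{C}^d$ with discrete fibers is open.

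Next, fix $p \in \Omega$, set $w_0 = F(p)$, and using the finiteness of $F^{-1}(w_0)$ choose a relatively compact neighborhood $U$ of $p$ with $\overline{U} \subset \Omega$ and $F^{-1}(w_0) \cap \overline{U} = \{p\}$. Then $F(\partial U)$ is a compact subset of $\mathbb{C}^d$ not containing $w_0$, so one may pick $\varepsilon > 0$ with $B(w_0, \varepsilon) \cap F(\partial U) = \emptyset$. Setting $W = U \cap F^{-1}(B(w_0,\varepsilon))$, the restriction $F|_W \colon W \to B(w_0, \varepsilon)$ is proper, since any compact $K \subset B(w_0, \varepsilon)$ has $F^{-1}(K) \cap \overline{U}$ compact and disjoint from $\partial U$, hence contained in $W$.

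To finish, I would show that $F|_W$ is surjective onto the connected ball $B(w_0, \varepsilon)$, which gives $B(w_0, \varepsilon) \subset F(U)$ and hence openness at $p$. The image $F(W)$ is closed in $B(w_0, \varepsilon)$ by properness and contains $w_0$, so by connectedness it suffices to show that $F(W)$ is also open in $B(w_0, \varepsilon)$. This reduces to a local claim at each $q \in W$: the holomorphic germ of $F$ at $q$, having an isolated preimage over $F(q)$, is a branched covering of positive degree. The cleanest route is the Weierstrass preparation theorem: after a generic linear change of coordinates centered at $q$, each component $F_j - F_j(q)$ can be put in Weierstrass polynomial form, and together these exhibit $F$ as a finite branched cover of a polydisc neighborhood of $F(q)$, whose topological degree is a strictly positive integer.

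The main obstacle is precisely this last local claim — essentially Remmert's open mapping theorem in disguise. While the one-variable model $z \mapsto z^n$ makes the picture intuitive, the several-variable version requires genuine algebraic input (Weierstrass preparation, or equivalently Nullstellensatz applied to the Artinian local ring $\mathcal{O}_{\Omega,q}/(F_1 - F_1(q), \dots, F_d - F_d(q))$) to extract the branched-covering structure and to certify that the local degree is positive. Given this, the chain of implications assembled above delivers openness of $F$ globally.
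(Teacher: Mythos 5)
The paper does not prove this statement; it is quoted directly from Rudin as \cite[Theorem 15.1.6]{Ru2}, so there is no in-house argument to compare against. Judged on its own, your outline is the standard route and most of it is correct: compact analytic subsets of $\mathbb{C}^d$ are finite, the choice of $U$, $\varepsilon$ and $W = U\cap F^{-1}(B(w_0,\varepsilon))$ does make $F|_W$ proper, and reducing to surjectivity of $F|_W$ onto the connected ball is the right move.

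The one genuine gap is the local step, and the specific reduction you offer there is not sound as stated. Putting each component $F_j - F_j(q)$ separately into Weierstrass polynomial form, even after a generic linear change of coordinates, does not by itself exhibit $(F_1,\dots,F_d)$ as a finite branched cover of a polydisc: the resulting Weierstrass polynomials will in general be distinguished in different variables, and even if arranged to share one distinguished variable there is as yet no control on the fibers of the projection to the remaining coordinates --- one still needs elimination (resultants, or a Noether-normalization induction on $d$) to produce the finite projection that underlies the branched cover. What you actually need is the finite map theorem for germs, namely that an isolated fiber makes $\mathcal{O}_q/(F-F(q))$ a nonzero Artinian $\mathbb{C}$-algebra and that the germ is then a branched cover of degree $\dim_{\mathbb{C}}\mathcal{O}_q/(F-F(q))\geq 1$; you correctly flag this as the crux, but it is not a corollary of componentwise Weierstrass preparation. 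One way to close the argument while avoiding the full local structure theorem: discrete fibers force $JF\not\equiv 0$ on every connected open subset (else the rank theorem would give positive-dimensional fibers), so some regular point near $p$ maps into $B(w_0,\varepsilon)$; since the real Jacobian determinant of a holomorphic map equals $|JF|^2\geq 0$, the topological degree $\deg(F,U,\cdot)$ is a constant nonnegative integer on $B(w_0,\varepsilon)$ that is strictly positive at that regular value, hence positive throughout, which gives $B(w_0,\varepsilon)\subset F(U)$ and hence openness at $p$.
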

Let $F:\Omega\to \mathbb{C}^d$ be a holomorphic map and  $Z$ denotes
the zeros of the determinant of the Jacobian of $F$. Then its image
$F(Z)$ is
 called \emph{the critical set} of $F$.  Each point in $F(Z)$ is called a critical value, and
each point in $F(\Omega)-F(Z)$ is called a regular point. A
holomorphic proper map is always an $m$-folds map, and its critical
set is  a zero variety (\cite[Theorem 15.1.9]{Ru2}), stated as
below.
\begin{thm} Given two domains $\Omega$ and $\Omega_0$ in $\mathbb{C}^d$, suppose $F:\Omega\to \Omega_0$ is a
holomorphic proper function. Let $\sharp (w)$ denote the number of
points in $F^{-1}(w)$ with $w\in \Omega_0$.  \label{variety} Then
the following hold:
\begin{itemize}
  \item[$(1)$] There is an integer $m$ such that $\sharp (w)=m$ for all regular values $w$ of $F$
  and  $\sharp (w')<m$ for all critical values $w'$ of $F$;
  \item [$(2)$] The  critical set of $F$ is a zero variety in $\Omega_0.$
\end{itemize}
\end{thm}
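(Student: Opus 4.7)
\emph{Proof plan.} The plan is to combine three standard tools: finiteness of proper analytic fibers, the inverse function theorem plus a connectedness argument for part~$(1)$, and Remmert's proper mapping theorem for part~$(2)$. A preliminary observation I would use repeatedly is that for each $w\in\Omega_0$, the fibre $F^{-1}(w)$ is compact by properness and is an analytic subset of $\Omega$; a compact analytic subset of an open subset of $\mathbb{C}^d$ must be zero-dimensional, hence finite.

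\emph{Part $(1)$.} Let $Z=\{z\in\Omega:JF(z)=0\}$ and $C=F(Z)$. I would first show that $\sharp$ is locally constant on $\Omega_0\setminus C$. Given a regular value $w$ with $F^{-1}(w)=\{p_1,\dots,p_k\}$, the inverse function theorem produces disjoint open neighborhoods $U_j\ni p_j$ each mapped biholomorphically onto a common neighborhood $V$ of $w$, so $\sharp\geq k$ on $V$. Properness rules out stray preimages: if $w_n\to w$ in $V$ and $q_n\in F^{-1}(w_n)\setminus\bigcup_j U_j$, then $\{q_n\}$ must have a limit point in $\Omega$ (otherwise properness forces $\{w_n\}$ to leave every compact subset of $\Omega_0$), and that limit point lies in $F^{-1}(w)\setminus\bigcup_j U_j$, contradicting the choice of $U_j$. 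Hence $\sharp\equiv k$ on $V$. Since $C$ is a proper analytic subvariety of $\Omega_0$, the complement $\Omega_0\setminus C$ is connected and $\sharp\equiv m$ on it for some integer $m$. At a critical value $w'$, pick a critical preimage $p\in F^{-1}(w')\cap Z$ and invoke the local branched-covering description of a finite holomorphic map: the local mapping degree of $F$ at $p$ is $\geq 2$, so a small ball around $p$ accounts for $\geq 2$ preimages of every nearby regular value but contributes only one preimage of $w'$ itself. Summing these contributions over all points of $F^{-1}(w')$ and comparing with the count at a nearby regular value yields $\sharp(w')<m$.

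\emph{Part $(2)$ and main obstacle.} The zero set $Z$ of $JF$ is an analytic subset of $\Omega$, and $F|_Z$ is again proper since $F$ is. Remmert's proper mapping theorem then asserts that $F(Z)=C$ is an analytic subset of $\Omega_0$; because $m\geq 1$ there is at least one regular value, so $C\neq\Omega_0$ and $C$ is a proper zero variety. The delicate step in the whole argument is the strict inequality $\sharp(w')<m$, which rests on the local branched-covering structure of a finite holomorphic map at points where the Jacobian vanishes. The remaining ingredients, namely local biholomorphism at regular points, connectedness of the complement of a complex-analytic subvariety, and Remmert's theorem, are standard and routine to apply in this setting.
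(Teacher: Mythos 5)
The paper does not give its own proof of this statement: it is quoted from Rudin \cite[Theorem 15.1.9]{Ru2}, so the relevant comparison is with Rudin's argument. Your sketch captures the right geometric picture and the part-(1) local-biholomorphism plus properness argument is sound, but there are two genuine issues. First, an ordering problem: in part (1) you invoke the connectedness of $\Omega_0\setminus C$, which you justify by saying $C$ is a proper analytic subvariety, yet that is exactly part (2), which you establish afterwards via Remmert. The structural fact about $C$ must come first (or be proved in a way that does not presuppose part (1)).

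The more substantive gap is in part (2). Remmert's proper mapping theorem yields that $F(Z)$ is an \emph{analytic subset} of $\Omega_0$, but the paper's definition of a zero variety is the zero set of a \emph{single} nonconstant holomorphic function on $\Omega_0$. A codimension-one analytic set in a general domain need not be a global principal divisor (this is a second-Cousin-problem issue), so ``analytic set'' does not automatically upgrade to ``zero variety,'' and you give no argument to bridge this. Rudin's route is more constructive and sidesteps Remmert entirely: over the regular locus one forms the sheet-symmetric function $h(w)=\prod_{z\in F^{-1}(w)}JF(z)$, shows it is bounded and extends holomorphically across the critical set, and identifies the critical set with $\{h=0\}$. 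That construction simultaneously exhibits $C$ as a genuine zero variety and furnishes the connectedness of $\Omega_0\setminus C$ needed in part (1), so it resolves both of the gaps above at once. Your plan would become correct if you replaced the appeal to Remmert with this explicit construction (or supplied a separate argument that the analytic set produced by Remmert is principal in $\Omega_0$).
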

A subset $E$ of $\Omega$ is called \emph{ a zero variety }of
$\Omega$ if there is a nonconstant holomorphic function $f$ on
$\Omega$ such that $E=\{z\in \Omega | f(z)=0\}$.
A relatively closed subset $V$ of
$\Omega$  is called \emph{an (analytic) subvariety} of $\Omega$ if for
each point $w$ in $\Omega$ there is a neighborhood
 $\mathcal{N}$ of $w$ such that $V\cap \mathcal{N}$ equals the intersection of zeros of finitely many
holomorphic functions  over $\mathcal{N}$.

 An easier version of Remmert's Proper Mapping Theorem reads as follows, see \cite[p. 65]{Ch} or \cite{Re1,Re2}. 
  \begin{thm} \label{remmert}If $f:\Omega_0\to \Omega_1$ is a holomorphic proper map  and $\mathcal{Z}$ is a
subvariety of $\Omega_0$, then $f(\mathcal{Z})$ is a subvariety of
 $\Omega_1.$
\end{thm}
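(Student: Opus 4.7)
The plan is to establish the two defining properties of a subvariety for $f(\mathcal{Z})$: relative closedness in $\Omega_1$, and a local representation as the common zero set of finitely many holomorphic functions.

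First I would check closedness. If $w_n = f(z_n)$ with $z_n \in \mathcal{Z}$ and $w_n \to w_\infty \in \Omega_1$, then $\{w_n\} \cup \{w_\infty\}$ is a compact subset of $\Omega_1$, so by properness $\{z_n\}$ lies in a compact subset of $\Omega_0$. A convergent subsequence $z_{n_k} \to z_\infty \in \Omega_0$ satisfies $z_\infty \in \mathcal{Z}$ (since $\mathcal{Z}$ is relatively closed) and $f(z_\infty) = w_\infty$, so $w_\infty \in f(\mathcal{Z})$.

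The main work is the local analyticity near a fixed $w_0 \in f(\mathcal{Z})$. Because $\Omega_0$ and $\Omega_1$ both sit in $\mathbb{C}^d$, Theorem \ref{variety} tells us that $f$ is a finite branched $m$-to-$1$ covering, so $f^{-1}(w_0) \cap \mathcal{Z} = \{z_1,\dots,z_N\}$ is finite. Using properness I would select a small polydisc $U$ around $w_0$ together with pairwise disjoint open neighborhoods $V_i$ of the $z_i$ such that $f^{-1}(U) \cap \mathcal{Z} \subseteq \bigcup_i V_i$, and shrink each $V_i$ so that $\mathcal{Z} \cap V_i$ is the common zero set of finitely many holomorphic functions $h_{i,1},\dots,h_{i,k_i}$. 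The idea is then to push these local equations down to $U$: for any holomorphic function $h$ on $f^{-1}(U)$, the elementary symmetric functions of its values over the $m$-point fiber $f^{-1}(w)$ are holomorphic on $U$ (this is the canonical trace/norm construction for finite holomorphic maps, a consequence of the Weierstrass preparation theorem), and their common vanishing codes the condition that some preimage of $w$ annihilates $h$.

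The main obstacle is the following point: a point $w$ belongs to $f(\mathcal{Z})$ iff some single preimage of $w$ simultaneously kills all of $h_{i,1},\dots,h_{i,k_i}$ for some $i$; but naively forming the fiber-norms $N(h_{i,j})(w) = \prod_{z \in f^{-1}(w)} h_{i,j}(z)$ only records that each $h_{i,j}$ vanishes at \emph{some} (possibly different) preimage. I would resolve this with a standard elimination/resultant argument: after a generic linear change of coordinates on each $V_i$, put the local equations for $\mathcal{Z} \cap V_i$ in Weierstrass polynomial form with respect to a chosen direction, then use iterated resultants together with the fiber-norm construction to manufacture finitely many holomorphic functions on $U$ whose joint zero set is exactly $f(\mathcal{Z}) \cap U$. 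Combined with the already established closedness, this yields the subvariety conclusion.
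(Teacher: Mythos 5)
The paper itself does not prove this statement: it is quoted as a special case of Remmert's Proper Mapping Theorem, with a citation to Chirka's book and Remmert's original papers. So there is no internal proof to compare against, and I can only assess your sketch on its own terms.

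Your outline is the right one for the equidimensional situation the paper is in, where $\Omega_0,\Omega_1\subseteq\mathbb{C}^d$ and Theorem~\ref{variety} makes $f$ a finite branched $m$-sheeted cover. The closedness step is correct: the paper's sequential definition of properness is exactly what you invoke, a convergent subsequence of preimages exists, its limit lies in $\mathcal{Z}$ because $\mathcal{Z}$ is relatively closed, and continuity finishes it. You also correctly isolate the crux of the local step, namely that the naive fiber-norms $N(h_{i,j})$ only record that \emph{some} preimage kills each $h_{i,j}$ separately, whereas $f(\mathcal{Z})$ requires a single preimage to kill all of them simultaneously.

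Where the sketch stops short is exactly at that crux. ``Iterated resultants'' is not a safe black box here: the zero set of a pairwise resultant $\mathrm{Res}(h_{i,j},h_{i,k})$ records only that those two share a root over $w$, and intersecting pairwise-resultant loci over all $j,k$ can and does produce spurious points where different pairs share different roots but no single point kills the whole system. So the phrase ``iterated resultants together with the fiber-norm construction'' names the target without exhibiting a mechanism that avoids extraneous components. The standard clean device is to linearize first: set $h_i^{(a)}=\sum_j a_j h_{i,j}$, form the fiber-norm $N_i^{(a)}(w)=\prod_{z\in f^{-1}(w)\cap V_i}h_i^{(a)}(z)$ (holomorphic in $w$ by symmetric-function/Riemann-extension arguments you allude to, and a polynomial in $a$), and observe that $w\in f(\mathcal{Z}\cap V_i)$ if and only if $N_i^{(a)}(w)=0$ for \emph{every} $a$, i.e.\ if and only if all coefficients of $N_i^{(a)}$ as a polynomial in $a$ vanish at $w$. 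Those finitely many coefficients are the desired local defining functions, and taking the union over the finitely many $V_i$ finishes the local step. If you replace the vague elimination appeal with this (or spell out a genuinely correct resultant scheme), the argument is complete; as written it has a gap precisely at the point you flagged as ``the main obstacle.''
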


  We need some notions about analytic
continuation (\cite[Chapter 16]{Ru3}).  A function element  is an
ordered pair $(f,D)$, where $D$ is an open ball in $\mathbb{C}^d$
and $f$ is a holomorphic function on $D$. Two  function elements
$(f_0,D_0)$  and $(f_1,D_1)$ are  called direct continuations if
$D_0\cap D_1$ is not empty and $f_0 =f_1 $ holds on $D_0\cap D_1$. A
curve is
  a continuous map from $[0,1]$ into  $
\mathbb{C}^d$.
 For a function element $(f_0,D_0)$ and  a curve    $\gamma$   with $\gamma(0)\in D_0$,
if there is  a partition of  $[0,1]$:
     $$0=s_0<s_1<\cdots <s_n=1$$
 and function elements $(f_j,D_j)(0\leq j \leq n)$ such that
   \begin{itemize}
  \item [1.]  $(f_j,D_j)$  and $(f_{j+1},D_{j+1})$
are direct continuation for all $j$ with\linebreak $ 0\leq j\leq n-1
$;
  \item[2.] $\gamma [s_j,s_{j+1}]\subseteq D_j(0\leq j\leq n-1)$ and $\gamma(1)\in D_n$,
       \end{itemize}then   $(f_n, D_n)$ is called \emph{an analytic continuation of $(f_0, D_0)$ along
     $\gamma$} ; and $(f_0, D_0)$
 is called to \emph{admit} an analytic continuation along  $\gamma$.
 In this case, we write   $f_0\sim  f_n$. Clearly, $\sim$ defines an equivalence and we write
$[f]$ for \emph{the equivalent class} of $f$.

As follows, we will give a generalization of    local inverse. For
convenience,  assume  both $\Phi$ and $\Psi$ are   holomorphic
  maps  from $\overline{\Omega} $ to $ \mathbb{C}^d.$
   Write
 $$ Z(J\Phi) =\mathcal{Z}_\Phi \quad \mathrm{and} \quad  Z(J\Psi) =\mathcal{Z}_\Psi.$$
 Let    $$\mathcal{S}_{\Phi,\Psi} =\{(z,w)\in \Omega : \Psi(w)=\Phi(z),
   z\not\in \Phi^{-1}\big(\overline{\Psi ( \mathcal{Z}_\Psi )})\}.$$
and
$$\mathcal{S}_{\Psi,\Phi} =
\{(z,w)\in \Omega : \Phi(w)=\Psi(z),
   z\not\in \Psi^{-1}\big( \overline{\Phi (\mathcal{Z}_\Phi )})\}. $$
It can happen that $\mathcal{S}_{\Phi,\Psi} $ or
$\mathcal{S}_{\Psi,\Phi} $  is empty,  but  in interesting cases
they are Riemann manifolds.
    If there is   a subdomain $\Delta$ of $\Omega $
 and   a holomorphic function
 $\rho$ over $\Delta$ such that
 $$\Psi(\rho(z))=\Phi(z),\, z\in \Delta,$$
 then $\rho$ is called \emph{a  local solution} for   $\mathcal{S}_{\Phi,\Psi}$, 
   denoted by $$\rho\in \Psi^{-1}\circ \Phi.$$
 In particular, if $\Phi=\Psi,$ then $\rho$ is called \emph{a local inverse} of $\Phi$ \cite{T1} and
 we rewrite $S_\Phi$ for $\mathcal{S}_{\Phi,\Phi}$.
Following \cite{HZ}, a local solution $\rho$   for
$\mathcal{S}_{\Phi,\Psi}$ is called
\emph{admissible} if 
  for each curve $\gamma$ in $\Omega- \Phi^{-1}\big( \overline{\Psi (\mathcal{Z}_\Psi )})$, $\rho$ admits analytic
continuation with values in $\Omega .$ In this case, we say $\rho$
is admissible with respect to \linebreak
 $ \Phi^{-1}\big(\overline{\Psi ( \mathcal{Z}_\Psi )})$.  
 It can be shown that
$\Omega- \Phi^{-1}\big(\overline{\Psi ( \mathcal{Z}_\Psi)} )$ is
connected if both $\Phi$ and $\Psi $ are holomorphic on
$\overline{\Omega} $.
\begin{rem}By assuming that both $\Phi$ and $\Psi$ are   holomorphic
  maps  from $\overline{\Omega} $ to $ \mathbb{C}^d,$ we define $\mathcal{S}_{\Phi,\Psi}$,
$\mathcal{S}_{\Psi,\Phi}$ and admissible local solution. Now
  let $\Phi$ and $\Psi$ be holomorphic proper maps defined on $\Omega$ and $$\Phi(\Omega)=\Psi(\Omega)
  .$$
In this case, 
 $$\Omega- \Phi^{-1}\big(\Psi ( \mathcal{Z}_\Psi ))$$
is also connected.  Furthermore, $$\Psi^{-1}\big(\overline{\Phi (
\mathcal{Z}_\Phi )}) =\Psi^{-1}\big(\Phi ( \mathcal{Z}_\Phi )),$$
and
$$\Phi^{-1}\big(\overline{\Psi ( \mathcal{Z}_\Psi )}) =\Phi^{-1}\big(\Psi ( \mathcal{Z}_\Psi )).$$
To define  $\mathcal{S}_{\Phi,\Psi}$, $\mathcal{S}_{\Psi,\Phi}$ and
etc.,
 we must replace $\overline{\Phi(\mathcal{Z}_\Phi)}$ and $\overline{\Psi(\mathcal{Z}_\Psi)}$
  with $\Phi(\mathcal{Z}_\Phi)$ and $\Psi(\mathcal{Z}_\Psi)$, respectively.
\end{rem}

\vskip2mm
Given an admissible  local inverse $\rho$ of $\Phi$, define
$$\mathcal{E}_{[\rho]}h(z)=\sum_{\sigma\in [\rho]}h\circ \sigma(z) J\sigma(z),
 h\in L_a^2(\Omega), z\in \Omega-\Phi^{-1}\big(\overline{\Phi ( \mathcal{Z}_\Phi )}).$$
 The following results come from \cite{HZ}.
 \begin{thm} Suppose $\Phi :\Omega\to \C^d$ is  holomorphic  on $\overline{\Omega}$ and
 the image of  $\Phi $ contains an interior point.
 Then $\dim \mathcal{V}^*(\Phi,\Omega)<\infty,$ and $\mathcal{V}^*(\Phi,\Omega)$ is generated by   \label{ormain}
  $\mathcal{E}_{[\rho]}$,  where $\rho$ run over admissible local inverses  of
  $\Phi.$
 \end{thm}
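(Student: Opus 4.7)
My plan is to exploit the branched covering structure of $\Phi$ on its regular locus and to show that every element of $\mathcal{V}^*(\Phi,\Omega)$ is rigid enough to be written as a combination of the explicit averaging operators $\mathcal{E}_{[\rho]}$.

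First I would set up the covering picture. Since $\Phi$ is holomorphic on $\overline{\Omega}$ and $\Phi(\Omega)$ has non-empty interior, Theorems \ref{open} and \ref{variety} give that $\Phi$ is an $m$-to-$1$ branched covering onto an open set. On the connected open set $\Omega^\circ := \Omega - \Phi^{-1}(\overline{\Phi(\mathcal{Z}_\Phi)})$ it is an unbranched $m$-sheeted covering, and by Theorem \ref{remmert} the set $\overline{\Phi(\mathcal{Z}_\Phi)}$ sits inside a subvariety that is removable for bounded holomorphic data. The $m$ local inverses at a basepoint, viewed under analytic continuation in $\Omega^\circ$, therefore split into finitely many equivalence classes $[\rho_1],\ldots,[\rho_N]$ (each a single monodromy orbit), with $N\le m$.

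Next I would verify that each $\mathcal{E}_{[\rho_k]}$ is bounded and lies in $\mathcal{V}^*(\Phi,\Omega)$. Boundedness follows from a change-of-variables estimate combined with the uniform bound on $|J\sigma|$. Commutation with each $M_{\phi_j}$ is immediate from $\Phi\circ\sigma=\Phi$, since $\phi_j(\sigma(z))=\phi_j(z)$ lets the scalar $\phi_j(z)$ factor out of the defining sum. Commutation with $M_{\phi_j}^*$ is the substantive adjoint identity; I would verify it weakly by pairing against Bergman reproducing kernels $K_w$ and using the standard transformation rule for $K_w\circ\sigma$. The Jacobian weights $J\sigma$ in the definition of $\mathcal{E}_{[\rho]}$ are chosen precisely so that summing across a whole equivalence class produces the required symmetry.

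Finally I would prove generation and finite dimensionality. Given $T\in\mathcal{V}^*(\Phi,\Omega)$ and $z\in\Omega^\circ$, commutation of $T$ with every $M_{\phi_j}$ forces $T^*K_z$ to lie in the linear span of the kernels $\{K_{\sigma_i(z)}\}$ at the $m$ preimages of $\Phi(z)$, so $T^*K_z=\sum_i c_i(z)K_{\sigma_i(z)}$ with coefficients $c_i$ holomorphic along local branches. Commutation with each $M_{\phi_j}^*$ then forces the $c_i$ to be invariant under the monodromy action, hence constant on each equivalence class. Single-valuedness on $\Omega^\circ$ together with removability of $\overline{\Phi(\mathcal{Z}_\Phi)}$ upgrades ``locally constant'' to ``constant,'' giving $T=\sum_{k=1}^N c_k \mathcal{E}_{[\rho_k]}$. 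The hard part will be this last step: extracting from the adjoint commutation the rigidity that forces the $c_i$ to align within each monodromy orbit, which requires tracking how Jacobian factors from the kernel transformation rule interact with the sheet structure and is where the careful choice of weights in the definition of $\mathcal{E}_{[\rho]}$ pays off.
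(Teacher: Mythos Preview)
The paper does not actually prove this theorem: it is quoted from \cite{HZ} (note the sentence ``The following results come from \cite{HZ}'' immediately preceding Theorem~\ref{ormain} and Theorem~\ref{proper}). There is therefore no in-paper argument to compare your proposal against.

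Your outline follows the standard reproducing-kernel strategy used in this circle of results, and the overall shape---show $\mathcal{E}_{[\rho]}\in\mathcal{V}^*(\Phi,\Omega)$, then use $[T,M_{\phi_j}]=0$ to force $T^*K_z$ into the span of kernels at the $\Phi$-fiber, then use $[T,M_{\phi_j}^*]=0$ plus monodromy to pin down the coefficients---is correct in spirit. One genuine gap, however: you invoke Theorems~\ref{open} and~\ref{variety} to conclude that $\Phi$ is an $m$-to-$1$ branched covering, but those theorems assume properness (equivalently, compact fibers), which is \emph{not} part of the hypothesis here. The map $\Phi$ is only assumed holomorphic on $\overline{\Omega}$ with image containing an interior point. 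Without properness the fiber cardinality need not be constant on the regular locus, and a local inverse can escape $\Omega$ under analytic continuation; this is precisely why Theorem~\ref{ormain} restricts to \emph{admissible} local inverses while Theorem~\ref{proper} (the proper case) does not. Your step~1 and the final decomposition step both need to be reworked to accommodate this: you cannot simply partition ``the $m$ local inverses'' into monodromy orbits and sum over all of them, because some orbits may fail to stay inside $\Omega$ and hence fail to contribute bounded operators.
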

    \begin{thm}Let $\Omega$ and $\Omega_0$ be bounded domains in $\mathbb{C}^d$.
    Suppose \linebreak $\Phi   :\Omega\to \Omega_0$ is  a holomorphic proper
    map.
 Then $\mathcal{V}^*(\Phi ,\Omega)$ is generated by   \label{proper}
  $\mathcal{E}_{[\rho]}$, where $\rho$ are local inverses  of $\Phi .$
   In particular, the dimension of $\mathcal{V}^*(\Phi ,\Omega)$ equals the number of
   components of $\mathcal{S}_{\Phi }$. 
    \end{thm}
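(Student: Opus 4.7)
The plan is to reduce Theorem \ref{proper} to Theorem \ref{ormain} by exploiting the branched covering structure of a proper holomorphic map in place of boundary regularity. First I would set the stage: by Theorem \ref{variety}, the critical set $\mathcal{Z}_\Phi = Z(J\Phi)$ is a subvariety of $\Omega$, and by Remmert's theorem (Theorem \ref{remmert}) its image $\Phi(\mathcal{Z}_\Phi)$ is a subvariety of $\Omega_0$. Writing $U = \Omega_0 \setminus \Phi(\mathcal{Z}_\Phi)$ and $\widetilde\Omega = \Phi^{-1}(U)$, both sets are connected, open and dense (complements of subvarieties in connected complex manifolds), and $\Phi|_{\widetilde\Omega}\colon \widetilde\Omega \to U$ is an unramified $m$-fold covering.

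Next I would show that every local inverse of $\Phi$ is automatically admissible, which is the place where properness substitutes for boundary continuity. Given a local inverse $\rho$ and a curve $\gamma$ in $\widetilde\Omega$, I attempt to continue $\rho$ along $\Phi\circ\gamma\subset U$. Any continued value belongs to the fiber $\Phi^{-1}(\Phi\circ\gamma(t))$, which lies in a compact subset of $\Omega$ by properness; consequently the continuation cannot escape $\Omega$ and must terminate at a point of $\widetilde\Omega$. Hence every germ of local inverse admits analytic continuation throughout $\widetilde\Omega$, each equivalence class $[\rho]$ is finite, and the operators $\mathcal{E}_{[\rho]}$ are well-defined. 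A direct change-of-variables verification shows $\mathcal{E}_{[\rho]}\in\mathcal{V}^*(\Phi,\Omega)$, and distinct equivalence classes correspond bijectively to the connected components of $\mathcal{S}_\Phi$ by identifying each class with the analytic continuation of its graph $\{(z,\rho(z))\}$.

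Once admissibility and connectedness of $\widetilde\Omega$ are established, the proof of Theorem \ref{ormain} applies verbatim: the hypothesis that $\Phi$ be holomorphic on $\overline{\Omega}$ is invoked there only to produce precisely these two facts. This gives that the $\mathcal{E}_{[\rho]}$ generate $\mathcal{V}^*(\Phi,\Omega)$, that distinct classes give linearly independent operators, and that $\dim\mathcal{V}^*(\Phi,\Omega)$ equals the number of equivalence classes, which by the bijection above equals the number of components of $\mathcal{S}_\Phi$.

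The main obstacle I anticipate is the admissibility step: one must rule out that a partial continuation of $\rho$ along a path in $U$ develops an interior singularity. The remedy is properness — the putative blow-up sequence lies in a fixed compact fiber, so a subsequence converges in $\widetilde\Omega$, yielding an extension that contradicts the alleged singularity. Once this is in hand, the combinatorial bookkeeping between equivalence classes of local inverses and connected components of $\mathcal{S}_\Phi$ is routine, and the spanning and linear independence statements follow because the proof of Theorem \ref{ormain} never uses anything about $\Phi$ beyond what the covering structure on $\widetilde\Omega$ already provides.
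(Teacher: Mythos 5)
The paper does not actually prove this statement: Theorems~\ref{ormain} and~\ref{proper} are both quoted from \cite{HZ} and introduced with the single sentence ``The following results come from \cite{HZ},'' so there is no in-paper proof to compare your argument against. What I can do is assess your reduction of \ref{proper} to \ref{ormain} on its own terms.

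The geometric half of your argument is sound and, in fact, mirrors material already in the paper. Properness plus Theorems~\ref{variety} and~\ref{remmert} make $\Phi$ an unbranched $m$-sheeted covering over $U=\Omega_0\setminus\Phi(\mathcal{Z}_\Phi)$; the path-lifting property of that covering gives admissibility of every local inverse, and this is exactly the content of Lemma~\ref{admiss} specialized to $\Psi=\Phi$ (the paper even remarks, after Proposition~\ref{number}, that this case is treated in the proof of \cite[Theorem~1.4]{HZ}). Connectedness of $\widetilde\Omega$ and of $U$ is the Hausdorff-measure argument the authors recall from \cite[Theorems~14.4.5,~14.4.9]{Ru2}, and the bijection between equivalence classes of local inverses and components of $\mathcal{S}_\Phi$ is stated explicitly at the start of Section~\ref{sect3}. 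So that portion of your proposal is correct and consistent with the tools the paper itself deploys.

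The genuine gap is the sentence ``the proof of Theorem~\ref{ormain} applies verbatim: the hypothesis that $\Phi$ be holomorphic on $\overline\Omega$ is invoked there only to produce precisely these two facts.'' That is a claim about the internal structure of a proof in another paper which you have not opened, and it quietly absorbs several nontrivial steps: the $L^2(\Omega)$-boundedness of each $\mathcal{E}_{[\rho]}$, the argument that an arbitrary element of $\mathcal{V}^*(\Phi,\Omega)$ is a linear combination of the $\mathcal{E}_{[\rho]}$ (usually a kernel-representation or density argument), and the linear independence giving the dimension count. Any of these could in principle use boundary regularity of $\Phi$ in a way that does not reduce to ``admissibility plus connectedness'' (for example, a density argument that extends functions past $\partial\Omega$, or an $L^2$ estimate using a neighborhood of $\overline\Omega$). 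Until you verify against \cite{HZ} that nothing else leans on $\Phi\in H(\overline\Omega)$, your reduction is a plausible plan rather than a proof. You also flag admissibility as ``the main obstacle,'' but that step is the safest one; it is the claim about Theorem~\ref{ormain}'s proof that carries the real risk.
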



\section{\label{sect3}Properties of local solutions}
 This section gives some properties of local solutions and the proof
of Theorem \ref{dim} is presented.

For a domain $\Omega$ in $\mathbb{C}^d,$ first assume that both
  $\Phi$ and $\Psi$ are holomorphic on $\overline{\Omega}.$
  In this case,   $\mathcal{S}_{\Phi,\Psi}$ is a nonempty set.
For two local solutions $\rho$ and $\sigma$ for $\mathcal{S}_{\Phi,\Psi}$,
 if $\rho$ is an analytic continuation of $\sigma$, then their images lie in a same component of
 $\mathcal{S}_{\Phi,\Psi}$, and vice versa. Therefore, \emph{the number of   classes of  local solutions equals the number
 of components of $\mathcal{S}_{\Phi,\Psi}$.}

  For simplicity, we modify
$\mathcal{S}_{\Phi,\Psi}$ a bit by setting
 $$\mathcal{S}_{\Phi,\Psi} =\Big\{(z,w)\in \Omega : \Psi(w)=\Phi(z),
 z\not\in \Phi^{-1}\big(\overline{\Psi ( \mathcal{Z}_\Psi )}) ,
 \, z\not\in  \Psi^{-1}\big(\overline{\Phi ( \mathcal{Z}_\Phi )})\Big\}.$$
 That is, \begin{equation}
 \mathcal{S}_{\Phi,\Psi} =\Big\{(z,w)\in \Omega : \Psi(w)=\Phi(z)\}-\Gamma_{\Phi,\Psi},  \label{new}
 \end{equation}
 where $$\Gamma_{\Phi,\Psi}=\{(z,w)\in \Omega : \Psi(w)=\Phi(z),
 z \in \Phi^{-1}\big(\overline{\Psi ( \mathcal{Z}_\Psi )})
 \cup   \Psi^{-1}\big(\overline{\Phi ( \mathcal{Z}_\Phi )}) \Big\} .$$
 Note that the numbers of components of  both original  $\mathcal{S}_{\Phi,\Psi}$ and
$\mathcal{S}_{\Psi,\Phi}$ remains invariant after we remove only
a special set of complex codimension $1$ away from them.
 This can be seen as follows. By
 \cite[Theorem 14.4.9]{Ru2},  a zero
variety of a domain in  $\mathbb{C}^d$ can be represented as a union
of compact sets  $L_n$ whose Hausdorff measures
$h_{2d-2}(L_n)<\infty.$ 
   By \cite[Theorem 14.4.5]{Ru2}, for a connected domain $\mathcal{U}$ in $\mathbb{R}^{2d}$, if
   a relatively closed set  $G$ of  $\mathcal{U}$  can be written as the union of countably many
   compact sets $K_n$ whose Hausdorff measure $h_t(K_n) <\infty$ for some $ t\in (0, 2d-1),$
   then $\mathcal{U}-G$ is connected. Note that  $\Psi^{-1}\big(\overline{\Phi ( \mathcal{Z}_\Phi )})$
  can be presented as a union of countably many compact sets   whose Hausdorff
  dimensions are not larger than $2d-2$,
   as well as $ \overline{\Phi ( \mathcal{Z}_\Phi )}$. Letting $G=\Psi^{-1}\big(\overline{\Phi ( \mathcal{Z}_\Phi )})$
    we have
   $$\mathcal{O}-\Psi^{-1} \big(\overline{\Phi ( \mathcal{Z}_\Phi )})$$ is
   connected for any subdomain $\mathcal{O}$ of $\Omega$.
  Note that   $\mathcal{S}_{\Phi,\Psi}$
 consists of disjoint connected open components. Since
   the complex manifold $\mathcal{S}_{\Phi,\Psi}$ defined by
   (\ref{new})
    is exactly the original one minus $\Psi^{-1}\big(\overline{\Phi ( \mathcal{Z}_\Phi )}\big)$,
   they have the same number of components.

Note that $\mathcal{S}_{\Phi,\Psi}$ and $\mathcal{S}_{\Psi,\Phi}$
are equal now up to a permutation of coordinates, and they have the
same number of components. Thus the numbers of equivalent classes of
local solutions for $\mathcal{S}_{\Phi,\Psi}$ and
$\mathcal{S}_{\Psi,\Phi}$ are exactly equal. Letting   $n(\Phi,\Psi) $
denote the number of components of  $\mathcal{S}_{\Phi,\Psi}$,
  we have the following.
\begin{prop}\label{number} Suppose one of the following holds:
\begin{itemize}
  \item[$(i)$]  both $\Phi:\Omega\to \Phi(\Omega)$ and $\Psi:\Omega\to \Psi(\Omega)$
   are holomorphic proper maps and   $ \Phi(\Omega) = \Psi(\Omega);$
  \item [$(ii)$]  both $\Phi$ and $\Psi$ are  holomorphic over
$\overline{\Omega}$ and their  images in $\mathbb{C}^d$ have an
interior point.
\end{itemize}
Then $\mathcal{S}_{\Phi,\Psi}$ and $\mathcal{S}_{\Psi,\Phi}$ have
the same number of components;  that is, \linebreak  $n(\Phi,\Psi)=n(\Phi,\Psi) $.
\end{prop}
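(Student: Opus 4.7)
The plan is to reduce the equality $n(\Phi,\Psi)=n(\Psi,\Phi)$ to the elementary observation that the coordinate swap $\tau(z,w)=(w,z)$ is a biholomorphism of $\Omega^2$. The mild obstruction is that the two exclusion conditions in $\mathcal{S}_{\Phi,\Psi}$ and $\mathcal{S}_{\Psi,\Phi}$ live on the first coordinate in both definitions, so they are not \emph{literally} interchanged by $\tau$; the Hausdorff-measure tool used just before the proposition is what bridges this gap.

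First I would set $A:=\Phi^{-1}\bigl(\overline{\Psi(\mathcal{Z}_\Psi)}\bigr)\cup \Psi^{-1}\bigl(\overline{\Phi(\mathcal{Z}_\Phi)}\bigr)\subset \Omega$, a set symmetric in $\Phi$ and $\Psi$. Under hypothesis~(i) Theorems~\ref{variety} and \ref{remmert} give that $\Phi(\mathcal{Z}_\Phi)$ and $\Psi(\mathcal{Z}_\Psi)$ are analytic subvarieties of $\Phi(\Omega)=\Psi(\Omega)$, hence $A$ is a subvariety of $\Omega$; under hypothesis~(ii), $A$ is a countable union of compact sets of Hausdorff dimension at most $2d-2$, by applying [Ru2, Thm.~14.4.9] to a slight enlargement. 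In either case, $A$ is ``thin'' in the sense of [Ru2, Thm.~14.4.5]. Since $\tau$ sends $V_{\Phi,\Psi}:=\{(z,w):\Psi(w)=\Phi(z)\}$ biholomorphically onto $V_{\Psi,\Phi}:=\{(z,w):\Phi(w)=\Psi(z)\}$, one obtains
$$\tau(\mathcal{S}_{\Phi,\Psi})=\{(z',w')\in V_{\Psi,\Phi}: w'\notin A\},\qquad \mathcal{S}_{\Psi,\Phi}=\{(z',w')\in V_{\Psi,\Phi}: z'\notin A\}.$$

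The key step is to introduce the doubly-restricted set $T:=\tau(\mathcal{S}_{\Phi,\Psi})\cap \mathcal{S}_{\Psi,\Phi}=\{(z',w')\in V_{\Psi,\Phi}: z',w'\notin A\}$ and show that it has the same number of components as each of $\tau(\mathcal{S}_{\Phi,\Psi})$ and $\mathcal{S}_{\Psi,\Phi}$. By symmetry I only need to argue this for $\mathcal{S}_{\Psi,\Phi}$. The condition $z'\notin \Psi^{-1}\bigl(\overline{\Phi(\mathcal{Z}_\Phi)}\bigr)$ defining $\mathcal{S}_{\Psi,\Phi}$ forces $\Phi(w')=\Psi(z')\notin \overline{\Phi(\mathcal{Z}_\Phi)}$, hence $w'\notin \mathcal{Z}_\Phi$; consequently $\mathcal{S}_{\Psi,\Phi}$ is a complex manifold of complex dimension $d$ on which the second projection $\pi_{w'}$ is locally a biholomorphism. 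The piece we are throwing away, $\pi_{w'}^{-1}(A)$, is therefore, on each component of $\mathcal{S}_{\Psi,\Phi}$, a countable union of compacta of Hausdorff dimension $\le 2d-2$. By [Ru2, Thm.~14.4.5] its removal preserves the number of components, so $\mathcal{S}_{\Psi,\Phi}$ and $T$ have the same number of components. The analogous argument (swap the roles of $\Phi,\Psi$ and of the two coordinates) gives the same conclusion for $\tau(\mathcal{S}_{\Phi,\Psi})$ and $T$.

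Stringing the three equalities together yields
$$n(\Phi,\Psi)=\#\mathrm{comp}\bigl(\tau(\mathcal{S}_{\Phi,\Psi})\bigr)=\#\mathrm{comp}(T)=\#\mathrm{comp}(\mathcal{S}_{\Psi,\Phi})=n(\Psi,\Phi),$$
as required. The main obstacle is the verification that the projection $\pi_{w'}$ is a local biholomorphism where needed, which is exactly where one must carefully unwind the asymmetric exclusion to extract the regularity $w'\notin \mathcal{Z}_\Phi$; the subsequent appeal to the Hausdorff-measure lemma then mirrors the argument already given in the paragraph immediately preceding the proposition, so no new machinery is needed.
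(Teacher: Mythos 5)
Your overall strategy tracks the paper's closely: the paper also symmetrizes $\mathcal{S}_{\Phi,\Psi}$ and $\mathcal{S}_{\Psi,\Phi}$ by removing a thin set and then invokes the Hausdorff-measure lemmas [Ru2, Thms.\ 14.4.5, 14.4.9] to see that the count of components is unchanged, before observing that the symmetrized sets are exchanged by the coordinate swap. Your intermediate set $T$ is a clean formalization of that idea, and the identification of $A$, of $\tau(\mathcal{S}_{\Phi,\Psi})$, and of $\mathcal{S}_{\Psi,\Phi}$ is correct.

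There is, however, a concrete error in the key step. You deduce from $w'\notin\mathcal{Z}_\Phi$ that the \emph{second} projection $\pi_{w'}$ is locally a biholomorphism on $\mathcal{S}_{\Psi,\Phi}$. That is backwards. The condition $w'\notin\mathcal{Z}_\Phi$ says $D\Phi(w')$ is invertible, so by the implicit function theorem the equation $\Phi(w')=\Psi(z')$ is solved locally as $w'=\Phi^{-1}\!\big(\Psi(z')\big)=:\rho(z')$; it is therefore the \emph{first} projection $\pi_{z'}$ that is a local biholomorphism. The second projection is $(z',\rho(z'))\mapsto\rho(z')$, whose differential $D\rho(z')=\big(D\Phi(w')\big)^{-1}D\Psi(z')$ degenerates whenever $z'\in\mathcal{Z}_\Psi$, and nothing in the definition of $\mathcal{S}_{\Psi,\Phi}$ rules this out: the remaining hypothesis $z'\notin\Phi^{-1}\big(\overline{\Psi(\mathcal{Z}_\Psi)}\big)$ constrains $\Phi(z')$, not $\Psi(z')$. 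In fact the degeneracy points of $\pi_{w'}$ are exactly among the points you are trying to remove (if $z'\in\mathcal{Z}_\Psi$ then $\Phi(w')=\Psi(z')\in\overline{\Psi(\mathcal{Z}_\Psi)}$, hence $w'\in A$), so the argument ``$\pi_{w'}$ is a biholomorphism, hence $\pi_{w'}^{-1}(A)$ is thin'' fails precisely where it is needed.

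The repair: chart $\mathcal{S}_{\Psi,\Phi}$ by $\pi_{z'}$. The set removed in passing to $T$ is then, in the chart, $\rho^{-1}(A)$, which is not a biholomorphic copy of $A$. It is still thin, but this needs an extra line: decompose $\rho^{-1}(A)=\big(\rho^{-1}(A)\cap\mathcal{Z}_\rho\big)\cup\big(\rho^{-1}(A)\setminus\mathcal{Z}_\rho\big)$; the first piece sits inside the zero variety $\mathcal{Z}_\rho$ and is thin, while on the second piece $\rho$ is a local biholomorphism, so that part is a countable union of compacta of Hausdorff dimension $\le 2d-2$. (Equivalently, unwinding $w'\in A$ via $\Phi(w')=\Psi(z')$ identifies $\rho^{-1}(A)$ with $\Psi^{-1}\big(\overline{\Psi(\mathcal{Z}_\Psi)}\big)\cup(\Psi\circ\rho)^{-1}\big(\overline{\Phi(\mathcal{Z}_\Phi)}\big)$, two preimages of thin sets under nonconstant holomorphic maps, to which the paper's Hausdorff-dimension argument already applies.) With this replacement the rest of your chain of equalities goes through.
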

 Under Condition (i),
 the  local solutions for $\mathcal{S}_{\Phi,\Psi}$ turn out to be admissible.
The special case of $\Phi =\Psi$ is discussed in
the proof of  \cite[Theorem 1.4]{HZ}. 
\begin{lem} Suppose that $\Phi $ and $\Psi$ are   holomorphic proper  maps on $\Omega $
with  same images.  \label{admiss}
 Then each local solution  $\rho$ for $\mathcal{S}_{\Phi,\Psi}$ is admissible in $\Omega.$
    \end{lem}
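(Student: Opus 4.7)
The plan is a standard open–closed argument along the curve $\gamma$, with properness of $\Psi$ doing the work of keeping the continuation inside $\Omega$. Let $\rho$ be a local solution, defined on a subdomain $\Delta\subseteq\Omega$, satisfying $\Psi(\rho(z))=\Phi(z)$ on $\Delta$. Fix any curve $\gamma:[0,1]\to\Omega-\Phi^{-1}\bigl(\Psi(\mathcal{Z}_\Psi)\bigr)$ with $\gamma(0)\in\Delta$ (using $\Phi(\Omega)=\Psi(\Omega)$ so that closures may be dropped, per the remark), and set
$$T=\{t\in[0,1]:\rho\text{ admits an analytic continuation along }\gamma|_{[0,t]}\text{ with values in }\Omega\}.$$
Clearly $0\in T$, so it suffices to show that $T$ is both open and closed in $[0,1]$.

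For openness, suppose $t\in T$ and let $w=\rho(\gamma(t))\in\Omega$ be the value of the continuation. Then $\Psi(w)=\Phi(\gamma(t))$, and since $\gamma(t)\notin\Phi^{-1}\bigl(\Psi(\mathcal{Z}_\Psi)\bigr)$ we have $\Phi(\gamma(t))\notin\Psi(\mathcal{Z}_\Psi)$, forcing $w\notin\mathcal{Z}_\Psi$. Hence $J\Psi(w)\neq 0$ and the holomorphic inverse function theorem yields a local biholomorphism $\Psi^{-1}$ in a neighborhood of $w$. Composing with $\Phi$ gives a direct continuation $\rho=\Psi^{-1}\circ\Phi$ in a neighborhood of $\gamma(t)$, so an entire interval around $t$ lies in $T$.

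The main work is closedness; this is where properness is essential. Let $t_n\in T$ with $t_n\to t^{*}\in[0,1]$, and write $w_n=\rho(\gamma(t_n))\in\Omega$. Then
$$\Psi(w_n)=\Phi(\gamma(t_n))\longrightarrow\Phi(\gamma(t^{*}))\in\Phi(\Omega)=\Psi(\Omega),$$
so $\{\Psi(w_n)\}$ has a limit point in $\Psi(\Omega)$. By the definition of a proper map, $\{w_n\}$ must have a limit point in $\Omega$; passing to a subsequence, $w_n\to w^{*}\in\Omega$ with $\Psi(w^{*})=\Phi(\gamma(t^{*}))$. Since $\gamma(t^{*})\notin\Phi^{-1}\bigl(\Psi(\mathcal{Z}_\Psi)\bigr)$, again $w^{*}\notin\mathcal{Z}_\Psi$, and the inverse function theorem produces a local branch of $\Psi^{-1}$ near $w^{*}$. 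This branch matches the continuation of $\rho$ for $t_n$ close to $t^{*}$ (by uniqueness of the preimage near $w^{*}$), hence provides the direct continuation extending $\rho$ past $t^{*}$, giving $t^{*}\in T$.

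The one delicate point I expect to spell out carefully is the matching step in closedness: one must argue that the locally defined $\Psi^{-1}$ near $w^{*}$ actually agrees with the pre-existing continuation $\rho(\gamma(t_n))$ for large $n$, not with a different sheet of $\Psi^{-1}$. This is immediate from the fact that $\Psi$ is injective on a small neighborhood of $w^{*}$ and $w_n\to w^{*}$ eventually enters this neighborhood, so $w_n=\Psi^{-1}(\Phi(\gamma(t_n)))$ for the chosen branch. Thus $T$ is open, closed, and nonempty in the connected space $[0,1]$, giving $T=[0,1]$ and proving $\rho$ is admissible.
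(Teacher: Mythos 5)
Your proof is correct, and it takes a genuinely different organizational route from the paper. The paper's argument is a finite-cover (Heine–Borel) argument: using the fact that $\Psi$ is a proper map, Theorem~\ref{variety} gives a constant $n$ such that over any ball $U_w$ centered at a point of the curve $\gamma$ (which avoids the critical set) there are exactly $n$ local solutions $\rho_1^w,\dots,\rho_n^w$; the compact curve is covered by finitely many such balls, and the continuations are stitched across the overlaps since the $n$ solutions on adjacent balls must agree up to relabeling. You instead run an open–closed argument on $T=\{t: \rho \text{ continues along } \gamma|_{[0,t]} \text{ with values in } \Omega\}$, with properness invoked directly to prevent $w_n=\rho(\gamma(t_n))$ from escaping to $\partial\Omega$ in the closedness step. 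The substantive ingredients are the same in both proofs (the inverse function theorem off $\mathcal{Z}_\Psi$, and properness), but your version makes the role of properness entirely explicit and fills in the "matching" step that the paper dismisses as "straightforward," namely that the locally constructed branch of $\Psi^{-1}\circ\Phi$ near $w^*$ is a direct continuation of the existing chain rather than some other sheet; you justify this correctly by local injectivity of $\Psi$ near $w^*$. The trade-off is that the paper's cover argument gets, essentially for free, that \emph{all} $n$ germs $\rho_j^{z_0}$ continue simultaneously (a fact used implicitly elsewhere), whereas your argument treats one germ at a time; and the paper's use of Theorem~\ref{variety} records the constant sheet number $n$, which your proof does not surface. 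Both are valid; yours is arguably the more self-contained and rigorous exposition of the same lemma.
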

\begin{proof}Suppose that $\Phi $ and $\Psi$ are  holomorphic proper    maps on $\Omega $ with the same images.  Write
$$A= \Phi^{-1}\big(\Psi ( \mathcal{Z}_\Psi )\big).$$ 
 Since   $\Psi$ are    proper,  by Theorem \ref{variety}
   $\Psi ( \mathcal{Z}_\Psi ) $ is a zero variety. Then
      $\Psi ( \mathcal{Z}_\Psi )$ is  relatively closed in $\Psi(\Omega),$
  and thus  $A$ is  relatively closed in $ \Omega $.

For each curve $\gamma$ in $\Omega-A$, write $z_0=\gamma(0).$
Given a local solution $\rho$ satisfying $$\Psi(\rho(z_0))=\Phi(z_0),$$ we
will prove that $\rho$ admits analytic continuation along $\gamma.$
To see
 this, note that $\Phi$ and $\Psi$ has  same images. For each point $w$ on $\gamma$,
$\Phi(w)\in \Psi(\Omega).$
 Since
$\gamma \subseteq \Omega-A$,
$$ \Phi (\gamma) \cap \Psi ( \mathcal{Z}_\Psi ) =\emptyset .$$ By Theorem \ref{variety},
 there is a constant interger $n$ depending only on $\Psi$ so that
$\Psi^{-1}(\Phi(w))$ has exactly $n$  distinct  points. Furthermore,
there is an open ball $U_w$ centered at $w$ and $n$  holomorphic
maps $\rho_1^w, \cdots, \rho_n^w$ over $U_w$ satisfying
$$\Psi(\rho_j^w(z))=\Phi(z),\,z\in U_w,\, 1\leq j \leq n.$$
Since  $\gamma$ is compact, by Henie-Borel's theorem there
 are finitely many such balls $U_w$ whose union contains $\gamma.$ Then
  it is straightforward to show that  all $\rho_j^{z_0}$ $(1\leq j \leq n)$
admit analytic continuation along $\gamma.$ Since one of
$\rho_j^{z_0}$ \linebreak $(1\leq j \leq n)$ is the direct
continuation of $\rho,$ $\rho$  admits analytic continuation along
$\gamma.$ The
proof is complete.
\end{proof}
We are ready to  give the proof of Theorem \ref{dim}.
\vskip2mm
\noindent  \textbf{Proof of Theorem \ref{dim}.}
Suppose that both $\phi_1$ and $\phi_2$ are holomorphic proper maps over
 bounded domains $\Omega_1$ and $\Omega_2$ in $\mathbb{C}$, respectively, and $\phi_1(\Omega_1)=\phi_2(\Omega_2)$.
Let $$\Omega=\phi_1(\Omega_1)=\phi_2(\Omega_2).$$ By Theorem \ref{open} one can show that $(z_1+z_2,z_1^2+z_2^2)$ is
 an open map and by analysis    it is a proper map on $\Omega\times \Omega$.
 Then as a composition of   $(z_1+z_2,z_1^2+z_2^2)$  and $(\phi_1(z_1),\phi_2(z_2))$, $ \Upsilon_{\phi_1 ,\phi_2}$ is
  a holomorphic proper map on
 $\Omega_1\times \Omega_2, $ and by Theorem \ref{proper} the von Neumann algebra $\mathcal{V}^*(\Upsilon_{\phi_1 ,\phi_2},\Omega_1\times \Omega_2)$
 is generated by $\mathcal{E}_{\rho}$, where $\rho$ run over   local inverses  of $\Upsilon_{\phi_1 ,\phi_2}$.
 Note by Lemma \ref{admiss} all these $\rho$ are necessarily admissible.

Below we will determine the local inverse of  $\Upsilon_{\phi_1 ,\phi_2}$.
Observe that
\begin{equation}(\lambda_1+\lambda_2,\lambda_1^2+\lambda^2_2)=(\mu_1+\mu_2,\mu_1^2+\mu^2_2) \label{test}\end{equation}
is equivalent to $$(\lambda_1+\lambda_2,\lambda_1  \lambda_2)=(\mu_1+\mu_2,\mu_1 \mu_2),$$
Then $(\lambda_1,\lambda_2)$ and $(\mu_1,\mu_2)$ are the same
zeros of a polynomial $p$  counting multiplicity, with
$p(x)= x^2+ (\lambda_1+\lambda_2) x+\lambda_1  \lambda_2 .$
Thus the solutions for (\ref{test}) are
$$(\lambda_1,\lambda_2)=(\mu_1,\mu_2)$$
and
$$(\lambda_1,\lambda_2)=(\mu_2,\mu_1).$$
By this observation, solving
 $$ \Upsilon_{\phi_1,\phi_2} (w_1,w_2)=\Upsilon_{\phi_1,\phi_2} (z_1,z_2) $$
 is equivalent to solving
 \[    \left\{\begin{array}{cc}
 \phi_1(w_1)=\phi_1(z_1), \\
   \phi_2(w_2)=\phi_2(z_2),
\end{array}\right.
\]
and  \[    \left\{\begin{array}{cc}
 \phi_1(w_1)=\phi_2(z_2), \\
   \phi_2(w_2)=\phi_1(z_1).
\end{array}\right.
\]
Then we have  either
\begin{equation} \label{ge}
(w_1,w_2)=(\sigma_1(z_1),\sigma_2(z_2)),\, \sigma_1\in \phi_1^{-1}\circ \phi_1, \,
\sigma_2 \in   \phi_2^{-1}\circ \phi_2,
\end{equation}
or
\begin{equation*}
 (w_1,w_2)=( \tau_1(z_2),  \tau_2(z_1) ),\,    \tau_1 \in \phi_1^{-1}\circ \phi_2,\,
   \sigma_2\in \tau_2^{-1}\circ \phi_1.
 \end{equation*}
 Hence by Proposition \ref{number} $\Upsilon_{\phi_1,\phi_2}$
 has exactly $n(\phi_1,\phi_1) n(\phi_2,\phi_2)+n(\phi_1,\phi_2)^2$
 equivalent classes for admissible local inverses.
Since $\mathcal{V}^*( \Upsilon_{\phi_1,\phi_2},\Omega_1\times \Omega_2)$ is generated by
$\mathcal{E}_{\rho}$ where $\rho$ are admissible local inverses of  $\Upsilon_{\phi_1,\phi_2}$,
$$\dim \mathcal{V}^*( \Upsilon_{\phi_1,\phi_2},\Omega_1\times \Omega_2)
    =   n(\phi_1,\phi_1) n(\phi_2,\phi_2)+n(\phi_1,\phi_2)^2.$$
Since $\dim \mathcal{V}^*(\phi_j,\Omega_j )=n(\phi_j,\phi_j)$, $j=1,2$,
$$\dim \mathcal{V}^*(\phi_1,\Omega_1 )\otimes \mathcal{V}^*(\phi_2,
\Omega_2)=n(\phi_1,\phi_1) n(\phi_2,\phi_2)< \dim \mathcal{V}^*( \Upsilon_{\phi_1,\phi_2},\Omega_1\times \Omega_2).$$
 Therefore, $ \mathcal{V}^*( \Upsilon_{\phi_1,\phi_2},\Omega_1\times \Omega_2) $ is not
$*$-isomorphic to the  von Neumann algebra
$\mathcal{V}^*(\phi_1,\Omega_1 )\otimes \mathcal{V}^*(\phi_2,
\Omega_2).$
\vskip1mm
Besides, the map $(\phi_1(z_1),\phi_2(z_2))$ is a proper map whose local inverses are exactly of the form
(\ref{ge}), and by Theorem \ref{proper} $$\mathcal{V}^*(\phi_1(z_1),\phi_2(z_2),\Omega_1 \times
\Omega_2)=\mathcal{V}^*(\phi_1,\Omega_1 )\otimes \mathcal{V}^*(\phi_2,
\Omega_2).$$
The proof of Theorem \ref{dim} is finished.
$\hfill \square$

 \vskip2mm
The following example shows that $\phi_1(\Omega_1)=\phi_2(\Omega_2) $ is sharp  in the sense that it
can not be replaced with
$\overline{\phi_1(\Omega_1)}=\overline{\phi_2(\Omega_2)} .$

\begin{exam}\label{ex}Put $\Omega=\mathbb{D}-[-1,0]$. Write  $f(z)=z, z\in \mathbb{D}$ and
$g$ is the restriction of $f$ on $\Omega$.  Obviously, $f$and $g$ are  proper maps  on $\mathbb{D}$ and $\Omega$
respectively.
 Set
 $$\Upsilon_{f,g}(z_1,z_2)=(z_1+z_2,z_1^2+z_2^2),z_1\in \mathbb{D},z_2\in \Omega.$$
We will prove that $$ \mathcal{V}^*(\Upsilon_{f,g}, \mathbb{D}\times
\Omega)=\mathbb{C}I;$$ equivalently, $ \mathcal{V}^*(\Upsilon_{f,g},
\mathbb{D}\times \Omega)$ is $*$-isomorphic to $\mathcal{V}^*(f,
\mathbb{D} )\otimes \mathcal{V}^*(g, \Omega ).$ For this, letting $\rho(z_1,z_2)=(z_2,z_1),$ we have that each operator
 $S$ in $\mathcal{V}^*(\Phi_{f,g}, \mathbb{D}\times \Omega)$ is of the
 form
 $$Sh(z_1,z_2) =c_1h(z_1,z_2) + c_2 h\circ \rho(z_1,z_2), (z_1,z_2)\in \mathbb{D}\times \Omega.$$
If $ \mathcal{V}^*(\Upsilon_{f,g}, \mathbb{D}\times
\Omega)\neq \mathbb{C}I,$ $h\mapsto
h\circ \rho$ defines a bounded linear operator on
$L_a^2(\mathbb{D}\times \Omega)$,  and it  maps $L_a^2(\mathbb{D})\otimes
L_a^2(\Omega)$ to $L_a^2(\mathbb{D}\times \Omega)$. By the form of
$\rho,$ it is direct to check that each function in
$L_a^2(\Omega)$ extends holomorphically to a function in
$L_a^2(\mathbb{D})$. But this can not be true because  $\ln z_1 \in
L_a^2(\Omega)$ but $\ln
 z_1  \not\in
L_a^2( \mathbb{D})$ as  $\ln z_1$ can not be extended to an holomorphic function over $ \mathbb{D} $.
\end{exam}

Suppose that $\Phi $ and $\Psi$ are   holomorphic proper  maps on $\Omega $
with  same images.  Then the closure of the range
of an admissible local solution and all its continuations equals
that of $\Omega.$ To be precise, let $\sigma$ be an  admissible
local solution for $\mathcal{S}_{\Phi,\Psi}$ and $[\sigma]$ denotes
the equivalent class of $\sigma,$  Image $ [\sigma]$ denotes the
union of all images of local solutions in the equivalent class of
$\sigma.$  By definition we have $$\mathrm{Image} \,[\sigma] \subseteq
\Omega.$$ Then the inverse $\sigma^-$ of $\sigma$ is a local
solution of $\mathcal{S}_{\Psi,\Phi}$.  By Lemma \ref{admiss}, both
$\sigma$ and $\sigma^-$ are admissible with respect to the set
$\mathcal{E}$   defined by
$$\mathcal{E} = \Phi^{-1}(\Psi ( \mathcal{Z}_\Psi ))
\bigcup   \Psi^{-1}(\Phi (\mathcal{Z}_\Phi ).$$ We address that
$\Omega-\mathcal{E}$ is connected. Since $\sigma^-$ or its
continuation is well defined on each given  point of $\Omega-
\mathcal{E}$,  the union of  the images of $\sigma$ and all its
continuation contains
 $\Omega- \mathcal{E}$. That is,
$$\Omega-\mathcal{E} \subseteq \mathrm{Image}\, [\sigma] ,$$
forcing
\begin{equation*}  \Omega-\mathcal{E} \subseteq \mathrm{Image}\, [\sigma] \subseteq   \Omega.  \end{equation*}
Note that $\mathcal{E}$ is  relatively closed subset of $\Omega$
 with zero Lebesgue measure.   We get the following.
 \begin{lem} If both $\Phi$ and $\Psi$ are holomorphic proper maps on $\Omega$ with same images, then
for each  local solution $\sigma$ for  $\mathcal{S}_{\Phi,\Psi}$
\begin{equation} \overline{ \mathrm{Image}\, [\sigma]} =  \overline{ \Omega}.
 \label{subset}  \end{equation}
 \end{lem}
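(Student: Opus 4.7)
The plan is essentially to finish the argument that the paragraph preceding the lemma has already laid out; what is still needed is a clean accounting of why $\overline{\Omega-\mathcal E}=\overline\Omega$ and why the relation $\Omega-\mathcal E\subseteq\mathrm{Image}\,[\sigma]$ yields the claimed equality.

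First, I would introduce the exceptional set
$\mathcal E=\Phi^{-1}(\Psi(\mathcal Z_\Psi))\cup\Psi^{-1}(\Phi(\mathcal Z_\Phi))$
and record its geometric features. By Theorem \ref{variety}, since $\Phi$ and $\Psi$ are proper, $\Phi(\mathcal Z_\Phi)$ and $\Psi(\mathcal Z_\Psi)$ are zero varieties in the common image $\Phi(\Omega)=\Psi(\Omega)$, and pulling back by the proper (hence continuous, finite-to-one, holomorphic) maps $\Phi$ and $\Psi$ shows that $\mathcal E$ is contained in a zero variety of $\Omega$. Consequently $\mathcal E$ is relatively closed in $\Omega$, has empty interior, and — by the Hausdorff-measure argument already invoked in the paragraph before Proposition \ref{number} — the complement $\Omega-\mathcal E$ is connected and dense in $\Omega$, so $\overline{\Omega-\mathcal E}=\overline\Omega$.

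Next I would prove the containment $\Omega-\mathcal E\subseteq\mathrm{Image}\,[\sigma]$. Fix $\sigma\in\Psi^{-1}\circ\Phi$ defined on a subdomain $\Delta$ and a base point $z_0\in\Delta-\mathcal E$ (which exists since $\Delta-\mathcal E$ is nonempty), setting $w_0=\sigma(z_0)\in\Omega-\mathcal E$. The local inverse $\sigma^{-}$ is a local solution for $\mathcal S_{\Psi,\Phi}$, and by Lemma \ref{admiss} it is admissible. Given any $w\in\Omega-\mathcal E$, choose a curve $\gamma\subseteq\Omega-\mathcal E$ from $w_0$ to $w$; admissibility lets us analytically continue $\sigma^{-}$ along $\gamma$ with values staying in $\Omega$, producing a function element $\tilde\sigma^{-}$ at $w$, and hence a point $z_w=\tilde\sigma^{-}(w)\in\Omega$ with $\Phi(z_w)=\Psi(w)$. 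Running the continuation in the other direction produces a $\tilde\sigma\in[\sigma]$ defined near $z_w$ with $\tilde\sigma(z_w)=w$. Thus $w\in\mathrm{Image}\,[\sigma]$.

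Combining the two containments we obtain $\Omega-\mathcal E\subseteq\mathrm{Image}\,[\sigma]\subseteq\Omega$, and taking closures and using $\overline{\Omega-\mathcal E}=\overline\Omega$ gives $\overline{\mathrm{Image}\,[\sigma]}=\overline\Omega$, as required. The only real subtlety — and thus the main obstacle to a careful write-up — is the simultaneous continuation of the pair $(\tilde\sigma,\tilde\sigma^{-})$ along $\gamma$: one has to verify that at every stage of the continuation the germ obtained is genuinely the local inverse of the corresponding germ of $\sigma$, which follows from the finite-sheeted structure of the proper map $\Psi$ (Theorem \ref{variety}) and the fact that $\gamma$ avoids both critical loci encoded in $\mathcal E$.
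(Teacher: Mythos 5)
Your proposal is correct and follows essentially the same route as the paper. The paper establishes the lemma in the paragraph immediately preceding it: it records $\mathrm{Image}\,[\sigma]\subseteq\Omega$, notes that $\sigma^-$ is a local solution for $\mathcal S_{\Psi,\Phi}$ which (together with $\sigma$) is admissible with respect to $\mathcal E=\Phi^{-1}(\Psi(\mathcal Z_\Psi))\cup\Psi^{-1}(\Phi(\mathcal Z_\Phi))$ by Lemma \ref{admiss}, uses the connectedness of $\Omega-\mathcal E$ to conclude $\Omega-\mathcal E\subseteq\mathrm{Image}\,[\sigma]$, and finishes by observing that $\mathcal E$ is a relatively closed Lebesgue-null set so that $\overline{\Omega-\mathcal E}=\overline\Omega$. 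Your write-up fills in exactly these steps, and the one point you flag as a subtlety --- that a continuation of $\sigma^-$ along $\gamma\subset\Omega-\mathcal E$ remains, at each stage, the germ-inverse of a corresponding continuation of $\sigma$ --- is indeed the content that justifies the paper's phrase ``the union of the images of $\sigma$ and all its continuations contains $\Omega-\mathcal E$''; since $\gamma$ avoids both critical loci, each germ of $\sigma^-$ along $\gamma$ is a local biholomorphism, so its inverse germ is well defined and tracks a continuation of $\sigma$, as you say.
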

\noindent In particular, in the case of $\Phi=\Psi,$ (\ref{subset}) holds for each
  local inverse $\sigma$ of a holomorphic proper
 map $\Phi$ over $\Omega.$
 \begin{rem}
If  $\Phi$ is holomorphic  over $\overline{\Omega}$
and
 $\sigma$ is an   admissible local inverse of  $\Phi$, then \label{eq}
(\ref{subset})  still holds. The reasoning is  similar  to the above discussion.
 \end{rem}

\section{ \label{sectix} Twisted finite Blaschke products} 
  This section  mainly establishes  Theorem \ref{bla} and gives some examples for it.
 As we will  see, there is an interplay between operator theory and
geometry of Riemann manifold.

We first provide the proof of Theorem \ref{bla}.

 \noindent \textbf{Proof of Theorem \ref{bla}.}  Let $B_1$ and $B_2$
 be finite Blaschke products and write
 $$\Upsilon_{B_1,B_2} (z)=(B_1(z_1)+B_2(z_2),B_1(z_1) ^2+B_2(z_2)^2 ),\, (z_1,z_2) \in \mathbb{D}^2.$$
   Since $\Upsilon_{B_1,B_2} $ is the composition of two  holomorphic proper maps  \linebreak $ (z_1+z_2,z_1^2+z_2^2)$ and
$(B_1(z_1),B_2(z_2))$, $\Upsilon_{B_1,B_2} $ is a holomorphic proper map
on  $\mathbb{D}^2.$

Below we will show that $ \mathcal{V}^*(
\Upsilon_{B_1,B_2},\mathbb{D}^2) $ is abelian if and only if
$\mathcal{S}_{B_1,B_2}$ is connected. To see this, by Theorem
\ref{ormain} studying $ \mathcal{V}^*( \Upsilon_{B_1,B_2},\mathbb{D}^2)
$ reduces to studying admissible local inverses of $
\Upsilon_{B_1,B_2}.$ For this, write
$$\Upsilon_{B_1,B_2}(w)= \Upsilon_{B_1,B_2}(z), w,z\in \mathbb{D}^2.$$ Then
by discussion in the proof of Theorem \ref{dim}, we get
  either
\begin{equation}
(w_1,w_2)=(\rho(z_1),\sigma(z_2)),\, \rho\in B_1^{-1}\circ B_1, \,
\sigma \in   B_2^{-1}\circ B_2, \label{loc1}
\end{equation}
or
\begin{equation}
 (w_1,w_2)=(\zeta(z_2), \eta(z_1) ),\,   \zeta \in B_1^{-1}\circ B_2,\, \eta\in  B_2^{-1}\circ B_1.  \label{loc2}
 \end{equation}
 By Lemma \ref{admiss}  all   $\rho $, $ \sigma  $, $\zeta$  and $\eta$  are admissible,
and the   local inverses  of
$\Upsilon_{B_1,B_2}$ defined in (\ref{loc1}) and (\ref{loc2}) are also admissible.

 To discuss
the abelian property of $ \mathcal{V}^*(
\Upsilon_{B_1,B_2},\mathbb{D}^2) $, we must study whether their
equivalent class commute with each other under composition. This
 relies on  whether $([\rho] \circ [\zeta](z_2), [\sigma]
\circ [\eta] (z_1))$ equals
$$([\zeta]\circ [\sigma] (z_2), [\eta] \circ [\rho] (z_1)).$$
Before continuing, one must determine what is the composition of two
equivalent classes \cite{GH3}. Observe that for any  local inverses
$[\tau_1]$ and $[\tau_2]$,
  $\mathcal{E}_{[\tau_1]} \mathcal{E}_{[\tau_2]}$ has the form $$\sum_{j}\mathcal{E}_{[\sigma_j]},$$
  where the sum is finite and some $\sigma_j$ may lie in the same class; and
     we define \emph{the composition} $$[\tau_1]\circ [\tau_2]$$ to be a formal sum
  $\sum_{j} [\sigma_j] $. Thus   $$\mathcal{E}_{[\tau_1]} \mathcal{E}_{[\tau_2]}=
    \mathcal{E}_{[\tau_2]} \mathcal{E}_{[\tau_1]} $$
    if and only if  $[\tau_1]\circ [\tau_2]=[\tau_2]\circ [\tau_1]$.
       The  formal sum of $k$ same equivalent classes  $ [\sigma]$ is denoted by $k  [\sigma]$.

 Since a finite Blaschke product has no critical point on the unit
circle  $\mathbb{T}$,
 it is conformal on    $\mathbb{T}$.
 Thus for $j,k=1,2$ the local solutions for $\mathcal{S}_{B_j,B_k} $  are holomorphic on a neighborhood
 of each point on    $\mathbb{T}$. For these local solutions, we   just
 discuss  their behavior  on the unit circle    $\mathbb{T}$.
 Suppose order $B_1=m$ and order $B_2=n$. Let  $a_1, \cdots,  a_m$ be $m$  distinct points on
 $\mathbb{T}$ and $b_1, \cdots,  b_n$ be $n$  distinct points on
 $\mathbb{T}$, both  in anti-clockwise direction and satisfying
\begin{equation}B_1(a_j)=B_2(b_k)=1,\, 1\leq j \leq m, \,  1\leq k \leq n.  \label{bb1}
\end{equation}

 Suppose $S_{B_1,B_2}$ is not connected. That is, $S_{B_1,B_2}$ has more than one component.
   Let $[\zeta](a_1) $ denote the set   of all
 $\widetilde{\zeta}(a_1)$ as  $\widetilde{\zeta}$ run over all analytic continuations along  loops
  in $\mathbb{T}$ beginning at $a_1.$
 Writing
 $$[\zeta](a_1)=\{b_k:k\in \Lambda\},$$
we have
$$[\zeta](a_1)\neq \{b_1, \cdots,  b_n\}.$$
 Thus there is at least a  local solution $\eta$
 of $S_{B_1,B_2}$  such that $\eta (a_1)\not\in [\zeta](a_1).$
 Write $$\eta (a_1)=b_{j_0}.$$  By conformal property of $B_1$ and $B_2 $ on $\mathbb{T}$,
   local solutions for $S_{B_1,B_2}$ (or $S_{B_2,B_1}$) admit continuation along any curve  in $\mathbb{T}$.
   In particular, by (\ref{bb1})
 there is an $a_{j}$ such that $\zeta^{-1}(b_{j_0})=a_j$, forcing  $$\zeta (a_j)= b_{j_0}.$$
  Let $\rho$ be the identity map, and $\sigma$ be the local inverse of $B_1$ determined by $\sigma (a_1)=a_j $.
Then $$b_{j_0}\in  \zeta \circ  \sigma (a_1) .$$ Noting that $\eta
(a_1)=b_{j_0}$, we deduce that $[\zeta]\circ [\sigma]$ must contain
$[\eta]$. But $$[\rho] \circ [\zeta]=[\zeta]\neq [\eta].$$
Therefore, $[\rho] \circ [\zeta] \neq [\zeta]\circ [\sigma] $,
forcing
$$([\rho] \circ [\zeta](z_2), [\sigma] \circ [\eta] (z_1)) \neq
([\zeta]\circ [\sigma] (z_2), [\eta] \circ [\rho] (z_1)).$$ This
means  that there are two equivalences of admissible local inverses
of $\Upsilon_{B_1,B_2}$, (\ref{loc1}) and (\ref{loc2}),   do not
commute. Then by Theorem \ref{ormain}  $ \mathcal{V}^*(
\Upsilon_{B_1,B_2},\mathbb{D}^2) $ is not abelian.

Now assume $S_{B_1,B_2}$ is connected. That is,  $S_{B_1,B_2}$  has
only one  component  as well as $S_{B_2,B_1}$. In this case  we will
prove that $ \mathcal{V}^*( \Upsilon_{B_1,B_2},\mathbb{D}^2) $ is
abelian. First, it will be shown that the equivalence of a local
inverse  (\ref{loc1}) commutes with  that of (\ref{loc2}).
 Assume $[\zeta]$
 and $[\eta]$ are the only equivalent classes for local solutions of
 $S_{B_1,B_2}$ and $S_{B_2,B_1}$, respectively.  By careful analysis of boundary behaviors of $B_1$ and $B_2,$
 we get
$$[\rho] \circ [\zeta] =\sharp [\rho]\cdot   [\zeta] \quad    \mathrm{and} \quad
 [\sigma] \circ [\eta] =\sharp  [\sigma] \cdot  [\eta].$$
Thus, letting $z_1$ and $z_2$ denote complex variables gives $$([\rho]
\circ [\zeta](z_2), [\sigma] \circ [\eta] (z_1))= \sharp [\rho]\cdot
\sharp  [\sigma] (  [\zeta](z_2),   [\eta] (z_1)).$$ Similarly,
$$([\zeta]\circ [\sigma] (z_2), [\eta] \circ [\rho] (z_1)) =\sharp
[\rho]\cdot \sharp  [\sigma] (  [\zeta](z_2),   [\eta] (z_1)),$$
which gives $$([\rho] \circ [\zeta](z_2), [\sigma] \circ [\eta]
(z_1)) = ([\zeta]\circ [\sigma] (z_2), [\eta] \circ [\rho] (z_1)).$$
Thus, the equivalence of a local inverse  (\ref{loc1}) commutes with the  equivalence of (\ref{loc2}).

As follows, we will prove that two  equivalences of local inverses with
the form (\ref{loc1}) commutes. In fact, since $B_1$ is a finite
Blaschke product,
 by  \cite[Theorem 1.1]{DPW}   $\mathcal{V}^*(B_1,\mathbb{D})$ is abelian.
Since $\mathcal{V}^*(B_1,\mathbb{D})$ is generated by
$\mathcal{E}_{[\rho]}$ where $\rho$ are local inverses of $B_1$, we
have
$$[\rho_1] \circ[\rho_2]  =[\rho_2]\circ[\rho_1], \, \rho_1,\rho_2   \in B_1^{-1}\circ B_1.$$
Similarly,
$$[\sigma_1] \circ [\sigma_2] = [\sigma_2] \circ [\sigma_1],    \, \sigma_1, \sigma_2 \in   B_2^{-1}\circ B_2. $$
Therefore, we have  $$([\rho_1](z_1),[\sigma_1](z_2))\circ
([\rho_2](z_1),[\sigma_2](z_2))=
   ([\rho_2](z_1),[\sigma_2](z_2)) \circ  ([\rho_1](z_1),[\sigma_1](z_2)). $$
In addition, one can prove that any two equivalences of local inverses of the form
(\ref{loc2}) commute with each other. In summary, all admissible local inverses of
$\Upsilon_{B_1,B_2}$ commute with each other under composition.
Therefore, if $S_{B_1,B_2}$ is connected,   $ \mathcal{V}^*(
\Upsilon_{B_1,B_2},\mathbb{D}^2) $ is  abelian.
      The proof is complete.
 $\hfill \square$
 \vskip2mm

Some special cases of Theorem \ref{bla} are of interest. If
$B_1=B_2$, one component of $\mathcal{S}_{B_1,B_2}$ is $\{(z,z):
z\in \mathbb{D}-J\}$, where $J$ is a finite set. Therefore,
$\mathcal{S}_{B_1,B_1}$
  is connected  if and only if  order $B_1$=1.  This immediately gives \cite[Example 6.5]{HZ}. 

In the case of $B_1\neq B_2$, things are interesting, see the
following.
\begin{cor}  Let $B_1$ and $B_2$
 be two finite Blaschke products.
Write \linebreak $m=\mathrm{order}\, B_1, $ and  $n=\mathrm{order}\, B_2.$ If
$GCD(m,n)=1$,    \label{cor}
 then   $ \mathcal{V}^*( \Upsilon_{B_1,B_2},\mathbb{D}^2) $ is abelian.
  \end{cor}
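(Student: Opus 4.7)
The plan is to invoke Theorem \ref{bla} and reduce to proving that $\mathcal{S}_{B_1,B_2}$ is connected whenever $GCD(m,n)=1$. I would view $\mathcal{S}_{B_1,B_2}$ as a finite unbranched covering of $\mathbb{D}\setminus C$, where $C\subset\mathbb{D}$ is the (finite) union of critical values of $B_1$ and $B_2$, via the projection $(z,w)\mapsto v:=B_1(z)=B_2(w)$. Over any regular $v$ the fiber has $mn$ points, which I index by pairs $(i,j)\in\mathbb{Z}_m\times\mathbb{Z}_n$ using the anticlockwise cyclic order on $\mathbb{T}$ of the preimages of $v$ under $B_1$ and $B_2$. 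The components of $\mathcal{S}_{B_1,B_2}$ correspond bijectively to the orbits of the monodromy action of $\pi_1(\mathbb{D}\setminus C,v_0)$ on this fiber, so any single monodromy element gives an upper bound on the number of components.

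I would then compute the monodromy along a loop $\gamma$ in $\mathbb{D}$ lying close to and homotopic to $\mathbb{T}$, traversed once anticlockwise. Since neither $B_1$ nor $B_2$ has critical points on $\mathbb{T}$, each restricts to an orientation-preserving covering $\mathbb{T}\to\mathbb{T}$ of the indicated degree. Lifting $B_j$ to a strictly increasing $\alpha_j:\mathbb{R}\to\mathbb{R}$ with $\alpha_j(\theta+2\pi)=\alpha_j(\theta)+2\pi\deg B_j$, and observing that the $B_1$-preimages of $e^{i\phi}$ are parametrised by $\theta_i(\phi)=\alpha_1^{-1}(\phi+2\pi i+c_1)$ for $i=0,\ldots,m-1$, one reads off $\theta_i(\phi+2\pi)=\theta_{i+1}(\phi)$: each preimage is shifted to its immediate cyclic successor as $v$ makes one anticlockwise circuit, and the same holds for the $B_2$-preimages. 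Consequently the $\gamma$-monodromy acts on $\mathbb{Z}_m\times\mathbb{Z}_n$ by $(i,j)\mapsto(i+1,j+1)$.

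Each orbit of this action has size $\mathrm{lcm}(m,n)$, producing $mn/\mathrm{lcm}(m,n)=GCD(m,n)$ orbits in total. Therefore $\mathcal{S}_{B_1,B_2}$ has at most $GCD(m,n)$ components. When $GCD(m,n)=1$ it is connected, and Theorem \ref{bla} gives that $\mathcal{V}^*(\Upsilon_{B_1,B_2},\mathbb{D}^2)$ is abelian.

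The main delicate point is justifying that the $\gamma$-monodromy is a cyclic shift by exactly $+1$ in each factor (rather than by some other residue, which would change the orbit count); this is the content of the cyclic-covering calculation above and rests on the orientation-preserving conformality of $B_1$ and $B_2$ on $\mathbb{T}$ together with the degree formula $\int_{\mathbb{T}} d\arg B_j = 2\pi\deg B_j$.
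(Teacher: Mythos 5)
Your proof is correct and is essentially the same argument as the paper's: both exploit the absence of critical points of $B_1,B_2$ on $\mathbb{T}$ to continue local solutions along curves at the boundary, and both reduce to a cyclic-shift computation. The paper follows a single solution $\zeta$ around the $a$-circle once (so $v=B_1(a)$ winds $m$ times), finding that the $b$-index advances by $m\bmod n$ and hence, since $GCD(m,n)=1$, visits every $b_k$; you instead compute the monodromy of one anticlockwise circuit of $v$ near $\mathbb{T}$ as $(i,j)\mapsto(i+1,j+1)$ on $\mathbb{Z}_m\times\mathbb{Z}_n$ and count orbits, getting the bound $GCD(m,n)$ on the number of components. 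These are the same calculation up to choice of generator of the same cyclic monodromy subgroup, though your covering-space/orbit-count framing is tidier and additionally yields the explicit upper bound $GCD(m,n)$ on the number of components, which the paper does not state.
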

 \begin{proof}  Recall that  $\mathcal{S}_{B_1,B_2}$  is connected if and only if all local solution for
$\mathcal{S}_{B_1,B_2}$ are equivalent in the sense of analytic
continuation. By Theorem
\ref{bla}, it suffices to show that if   $GCD(m,n)=1$,
 then  all local solutions for $\mathcal{S}_{B_1,B_2}$ are equivalent.  For this, note that the proof of Theorem \ref{bla}
has shown that a local solution for $S_{B_1,B_2}$ admits
   continuation along any curve contained in $\mathbb{T}$. Without loss of generality, $m>n.$
   Let $a_j$ and $b_k$ be chosen as in the proof of Theorem \ref{bla}.
 Suppose $\zeta$ is a local solution satisfying $$\zeta(a_1)=b_1.$$
 Note that for $1\leq j \leq m-1$ and $1\leq k \leq n-1 $, the image of
 the circular arc $\widetilde{a_ja_{j+1}}$ under $B_1$
is the same as that of the circular arc $\widetilde{b_kb_{k+1}}$
under $B_2$. Then we get
$$\widetilde{\zeta}(a_j)=b_j, \, 1\leq j \leq m.$$ where $ \widetilde{\zeta}$ denotes
some analytic continuation along a circular curve  $\gamma$ in
 $\mathbb{T}.$ Letting $\gamma$ go a bit further, and noting $\widetilde{\zeta}(a_m)=b_m$, we have
  $$ \breve{\zeta} (a_1)=b_{m+1} ,$$
where $ \breve{\zeta} $ is also an analytic continuation  of
$\zeta.$ This procedure can be repeated. Since $GCD(m,n)=1,$   for each $k(1\leq k \leq n)$ there exists an analytic
continuation $\eta$ of $\zeta$ such that
$$\eta (a_1)=b_k.$$
Thus all local solutions for $\mathcal{S}_{B_1,B_2}$ is an analytic
continuation of $\zeta$, and hence
   $\mathcal{S}_{B_1,B_2}$ has exactly one component. That is,  $\mathcal{S}_{B_1,B_2}$
is  connected to finish the proof. \end{proof} The inverse of  Corollary \ref{cor} fails
in general. However, the following shows it holds in a special case.

\begin{exam}  Write $B_1(\lambda)=\lambda^k$ and  $B_2(\lambda)=\lambda^l$,  where $\lambda \in \mathbb{D}$ and
$k,l\in\mathbb{Z}_+. $ We claim that  $\mathcal{S}_{B_1,B_2}$ is
connected if and only if $$GCD(k,l)=1.$$   Then by Theorem
\ref{bla}  $\mathcal{V}^*(z_1^k+z_2^l, z_1^{2k}+ z_2^{2l}, \mathbb{D}^2)$
is abelian if and only if  $$GCD(k,l)=1,$$  if and only  \label{kl}
if \,  $\mathcal{V}^*(z_1^k+z_2^l, \mathbb{D}^2)$ is abelian. The
latter statement follows directly from \cite[Theorem 1.1]{DH}.

To finish the proof, we must show that $\mathcal{S}_{B_1,B_2}$ is
connected if and only if $$GCD(k,l)=1.$$ In fact, by the discussion
in the above paragraph, if $GCD(k,l)=1,$ then
$\mathcal{S}_{B_1,B_2}$ is connected.
Now suppose $GCD(k,l)=d_0>1 $. 
Write $\omega_j= \exp(\frac{j 2\pi i}{k})$ and $$ \varsigma_j=
\exp(\frac{j 2\pi i}{l}),\, j\in \mathbb{Z}_+.$$
 Let $\eta$ denote local solution for $\mathcal{S}_{B_1,B_2}$  satisfying
 $\eta(\omega_1)=\varsigma_1.$    Note that
each local solution for $\mathcal{S}_{B_1,B_2}$ admits analytic
continuation  along any curve in $ \mathbb{C} -\{0\}$.
  All continuations of $\eta$ can be realized by a loop on $\mathbb{T}$.
  Then it is not difficult to see that
  $$[\eta](\omega_1) =\{\varsigma_{1+m d_0}: m\in \mathbb{Z}_+ \}.$$
Clearly, $[\eta](\omega_1) \neq \{  \varsigma_j: j\in \mathbb{Z}_+
\}$. Let  $j_0$  be an integer such that  $$ \varsigma_{j_0} \not\in
[\eta](\omega_1) ,$$ and let $\zeta$ denote the local solution for
$\mathcal{S}_{B_1,B_2}$  satisfying
 $\zeta(\omega_1)=\varsigma_{j_0}.$  Then
 $$[\eta] \neq [\zeta].$$
Thus there are at least two components of $\mathcal{S}_{B_1,B_2}$,
forcing
 $\mathcal{S}_{B_1,B_2}$  not to be
connected as desired.
\end{exam}
 Theorem \ref{bla} fails if   more than two Blaschke products are involved.
 The following shows that the number of variables play an important role here.   

\begin{exam}Let $d\geq 3.$ For $d$ Blaschke products $B_1$, $B_2 \cdots, B_d,$
 let $\Phi(z)$ denote
 $$(\sum_{k=1}^d B_k(z_k) ,  \sum_{k=1}^d B_k(z_k)^2, \cdots, \sum_{k=1}^d B_k (z_k)^d  ).$$
 Then  $ \mathcal{V}^*( \Phi,\mathbb{D}^d) $ is not abelian.

 We will just deal with the case of  $d=3$, and the remaining case  can be  similarly handled.  \label{tri}
Consider  some   solutions for the equation $\Phi(w)=\Phi(z):$
\[    \left\{\begin{array}{ccc}
 B_1(w_1)=B_2(z_2) \\
   B_2(w_2)=B_1(z_1) \\
    B_3(w_3)=B_3(z_3)
\end{array}\right.
\]
Then   $(w_1,w_2,w_3)=(\zeta_{1,2}(z_2),
\zeta_{2,1}(z_1),\zeta_{3,3}(z_3)),$ where $$\zeta_{j,k} \in
B_j^{-1}\circ B_k (1\leq j,k \leq 3) $$ are admissible local
solutions. Also, write
\[    \left\{\begin{array}{ccc}
 B_1(w_1)=B_3(z_3) \\
   B_2(w_2 )=B_2(z_2) \\
    B_3(w_3 )=B_1(z_1)
\end{array}\right.
\]
Then we get some other  admissible local inverses of $\Phi$:
$$(\zeta_{1,3}(z_3), \zeta_{2,2}(z_2),\zeta_{3,1}(z_1)).$$
By taking the compositions of their equivalent classes, we get
$$( [\zeta_{1,2}]\circ [\zeta_{2,2}](z_2),
 [\zeta_{2,1}]\circ [\zeta_{1,3}](z_3), [\zeta_{3,3}]\circ [\zeta_{3,1}](z_1)), $$
 and
$$( [\zeta_{1,3}]\circ [\zeta_{3,3}](z_3),
 [\zeta_{2,2}]\circ [\zeta_{2,1}](z_1), [\zeta_{3,1}]\circ [\zeta_{1,2}](z_2)). $$
Clearly, they are not   equal since the  variables do not
match. Thus not all equivalent classes of  local inverse
  commute  with each other under composition. Therefore,
  by Theorem \ref{ormain} $ \mathcal{V}^*( \Phi,\mathbb{D}^d) $ is not abelian.
 \end{exam}

\section{\label{sect5}General twisted proper maps}

  This section mainly generalize the results in the last section. We will
  focus on two twisted holomorphic proper maps and present
 the proof of   Theorem  \ref{32}.

To begin with, we give a general setting. Let $F=(f_1,\cdots, f_d )$ be a holomorphic function over
a domain on $\mathbb{C}^d$.   Define \begin{equation}\Upsilon_F(z)= (\varphi_1(z), \cdots,
\varphi_d(z)), \label{deff} \end{equation} where $$\varphi_1(z)= \sum_{j=1}^d f_j(z),
\varphi_2(z)=\sum_{j=1}^d f_j(z)^2, \cdots $$
 and
 $ \varphi_d(z)= \sum_{1\leq j\leq d} f_j(z) ^d.$
 Write
 $$\psi_1=\varphi_1, \psi_2(z)=\sum_{1\leq j<k \leq d}  f_j(z)f_k(z),  \cdots $$
 and
 $ \psi_d(z)= \Pi_{1\leq j\leq d} f_j(z)  .$
 Consider  the equation
 $$\Upsilon_F(w)=\Upsilon_F(z);$$ that is, $$(\varphi_1(w), \cdots, \varphi_d(w))=(\varphi_1(z), \cdots, \varphi_d(z)).$$
 This is equivalent to
$$(\psi_1(w), \cdots,\psi_d(w))=(\psi_1(z), \cdots, \psi_d(z)).$$
 Note that
 $$x^d- \psi_1(z) x^{d-1}+\cdots+ (-1)^{d-1} \psi_{d-1}(z) x + (-1)^{d}  \psi_d(z)=\prod_{j=1}^d (x-f_j(z)),$$
 and then $(f_1(w),\cdots, f_d(w))$ is a permutation of $(f_1(z),\cdots, f_d(z)).$
Thus solving the equation
 $\Upsilon_F(w)=\Upsilon_F(z)$
    is equivalent to solving all equations:
     \begin{equation}f_j(w)=f_{\pi (j)}(z),\,  1\leq j\leq d ,  \label{compat}
 \end{equation}
where $\pi$ runs over all permutations of $\{1,\cdots,d\}.$

\vskip2mm

A special case of (\ref{compat}) will be our focus. For simplicity, let  $\Omega$ be a bounded
 domain in $\mathbb{C}^2$. Let
 $\Phi=(\phi_1,\phi_2)$ and $\Psi=(\psi_1,\psi_2)$ be  holomorphic proper maps over $\Omega $ satisfying
$$\Phi(\Omega)=\Psi(\Omega).$$
In addition, we assume both of them are holomorphic on $\overline{\Omega}$.
Write
$$f_1(z)=\phi_1(z_1,z_2), f_2(z)=\phi_2(z_1,z_2),$$
and
$$f_3(z)=\psi_1(z_3,z_4), f_4(z)=\psi_2(z_3,z_4).$$
Put $F=(f_1,\cdots,f_4)$, and  rewrite $\Upsilon_{\Phi,\Psi}=\Upsilon_F$.
To investigate the structure of $\mathcal{V}^*(\Upsilon_{\Phi,\Psi}, \Omega^2)$, we must determine
 all admissible local inverses for $\Upsilon_{\Phi,\Psi}$ on $\Omega^2$. Among them, two admissible local inverses
 are related to
  $\mathcal{S}_\Phi$, $\mathcal{S}_\Psi$, and  $\mathcal{S}_{\Phi, \Psi}$.
  Precisely, let
 $$
 (\Phi(w_1,w_2),\Psi(w_3,w_4))=(\Phi(z_1,z_2),\Psi(z_3,z_4)),  $$ and
$$   (\Phi(w_1,w_2),\Psi(w_3,w_4))=(\Psi(z_3,z_4),\Phi(z_1,z_2)).$$
 We get their solutions $w=(w_1,w_2 , w_3,w_4)$:
 \begin{equation}w=(\sigma_1(z_1,z_2), \sigma_2(z_3,z_4)),\, \sigma_1  \in \Phi^{-1}\circ \Phi,\sigma_2 \in \Psi^{-1}\circ \Psi, \label{4.1}
 \end{equation}
 and
\begin{equation}w=(\eta_1(z_3,z_4), \eta_2(z_1,z_2)) ,\, \eta_1 \in \Phi^{-1}\circ \Psi,\eta_2 \in \Psi^{-1}\circ \Phi. \label{4.2}
 \end{equation}
 By Lemma \ref{admiss}, both (\ref{4.1}) and (\ref{4.2}) give
 admissible local inverses of $\Upsilon_{\Phi,\Psi}$.
\vskip1mm
  It is possible that there exists  admissible local inverse not of the form
   (\ref{4.1}) or (\ref{4.2}). The following is such an example.
   \begin{exam} Let
 $\Omega=\mathbb{D}^2$, put  $\Phi(z_1,z_2)=(B_1(z_1),B_2(z_2))$
and $$\Psi(z_1,z_2)=(B_3(z_1),B_4(z_2)),$$ where $B_1,\cdots,B_4$ are  finite Blaschke products.
 In this case, we have more admissible local inverses of $\Upsilon_{\Phi,\Psi}.$
 In fact, for  a permutation $\pi$  of $\{1,\cdots,4\} $ (\ref{compat}) gives
  $$B_j(w_j)=B_{\pi (j)}(z_{\pi(j)}), $$
  and this gives admissible local inverses
  $$(\rho_1(z_{\pi(1)}), \cdots, \rho_4(z_{\pi(4)} )),\,
  \rho_j \in B_j^{-1} \circ B_{\pi (j)}, 1\leq j \leq 4.$$
Since there are many choices of $\pi$,   $\Upsilon_{\Phi,\Psi} $ has  a lot of admissible local inverses.
\end{exam}

\vskip2mm
  Let $(g_1,g_2,g_3,g_4) $ be a permutation of $(f_1,f_2,f_3,f_4)$.
By (\ref{compat}), we get
$$(f_1(w),f_2(w),f_3(w),f_4(w))=(g_1(z),g_2(z),g_3(z),g_4(z)). $$
Letting $\rho$ be a local inverse of $\Upsilon_{\Phi,\Psi}$ gives
$$(f_1(\rho(z)),f_2(\rho(z)),f_3(\rho(z)),f_4(\rho(z)))=(g_1(z),g_2(z),g_3(z),g_4(z)), z\in \Omega-\mathcal{E} , $$
where $\mathcal{E}$ is a    subset of $\Omega$ with zero Lebesgue measure.
Then by  (\ref{subset}) we get
\begin{equation}\overline{   (f_1,f_2,f_3,f_4)(\Omega^2)} =\overline{ (g_1 ,g_2 ,g_3 ,g_4   )(\Omega^2)} . \label{comp}
 \end{equation}
  The equation  $$(f_1(w),f_2(w),f_3(w),f_4(w))=(g_1(z),g_2(z),g_3(z),g_4(z))  $$
 is called \emph{compatible} if  (\ref{comp}) holds.
If  the only possible compatible equations are
$$(\Phi(w_1,w_2),\Psi(w_3,w_4))=(\Phi(z_1,z_2),\Psi(z_3,z_4)),$$
and $$(\Phi(w_1,w_2),\Psi(w_3,w_4))=(\Psi(z_3,z_4),\Phi(z_1,z_2)),$$
then we call $\Upsilon_{\Phi,\Psi}$ \emph{has no nontrivial  compatible equations.}

 By the above analysis, we have the following result, which tells us that in some cases (\ref{4.1}) and  (\ref{4.2}) exhaust all admissible local inverse
 of  $\Upsilon_{\Phi,\Psi}$ on $\Omega^2$.
\begin{thm} Suppose $\Phi$ and $\Psi $ are two holomorphic proper maps over $\Omega $ such that
$\Phi(\Omega)=\Psi(\Omega)$, and both maps are holomorphic on
$\overline{\Omega}.$ Assume that $\Upsilon_{\Phi,\Psi}$ has no
nontrivial  compatible equation.  \label{53} Then
$\mathcal{V}^*(\Upsilon_{\Phi,\Psi},\Omega^2)$ is generated by
$\mathcal{E}_{[\rho]}$, where $ \rho $ is of the form (\ref{4.1}) or
(\ref{4.2}).
 \end{thm}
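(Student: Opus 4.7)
The strategy is to apply Theorem \ref{proper} to the proper map $\Upsilon_{\Phi,\Psi}$ on $\Omega^2$, reducing the description of $\mathcal{V}^*(\Upsilon_{\Phi,\Psi},\Omega^2)$ to the classification of its local inverses, and then to use the no-nontrivial-compatible-equation hypothesis to show that each such local inverse must be an instance of (\ref{4.1}) or (\ref{4.2}). First I would observe that $\Upsilon_{\Phi,\Psi}=P\circ(\Phi(z_1,z_2),\Psi(z_3,z_4))$ is a composition of proper maps: the intermediate product map is proper because $\Phi$ and $\Psi$ are, and the power-sum tuple $P$ is proper onto its image. Hence Theorem \ref{proper} applies, and $\mathcal{V}^*(\Upsilon_{\Phi,\Psi},\Omega^2)$ is generated by the operators $\mathcal{E}_{[\rho]}$ as $\rho$ ranges over local inverses of $\Upsilon_{\Phi,\Psi}$, each of which is admissible by Lemma \ref{admiss}.

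Next I would classify those local inverses. Fix a local inverse $\rho$ on a connected subdomain $\Delta\subseteq\Omega^2$. By the derivation leading to (\ref{compat}), the identity $\Upsilon_F(\rho(z))=\Upsilon_F(z)$ is equivalent, at each $z$, to $(f_j(\rho(z)))_{j=1}^{4}$ being a permutation of $(f_j(z))_{j=1}^{4}$. Since $\Phi(\Omega)$ and $\Psi(\Omega)$ are open in $\mathbb{C}^2$, the components $\phi_1,\phi_2$ and $\psi_1,\psi_2$ must be distinct as holomorphic functions; together with the fact that $f_1,f_2$ depend only on $(z_1,z_2)$ while $f_3,f_4$ depend only on $(z_3,z_4)$, this forces $f_1(z),\ldots,f_4(z)$ to be pairwise distinct off a proper subvariety of $\Omega^2$. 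On this dense open locus the permutation $\pi_z$ is uniquely determined and depends continuously on $z$, hence equals a single fixed $\pi\in S_4$ on $\Delta$; by the identity theorem for holomorphic functions, $f_j(\rho(z))=f_{\pi(j)}(z)$ then holds throughout $\Delta$.

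Finally I would invoke the hypothesis. The existence of $\rho$ shows that $f_j(w)=f_{\pi(j)}(z)$ admits solutions over $\Delta$, and combining this with (\ref{subset}) exactly as in the derivation of (\ref{comp}) yields $\overline{F(\Omega^2)}=\overline{(f_{\pi(1)},\ldots,f_{\pi(4)})(\Omega^2)}$, so the system is compatible. By assumption only two compatible systems are allowed: the trivial one, which gives $\Phi(\rho_1,\rho_2)=\Phi$ and $\Psi(\rho_3,\rho_4)=\Psi$ so that $\rho$ falls into the family (\ref{4.1}); and the swap, which analogously places $\rho$ into (\ref{4.2}). Thus every local inverse of $\Upsilon_{\Phi,\Psi}$ has one of these two forms, and the stated generation result follows.

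The principal obstacle I anticipate is the rigidity of the permutation $\pi$ on $\Delta$. One must verify carefully that the four components $f_j$ genuinely take pairwise distinct values on a dense open subset of $\Omega^2$, which relies on the openness of $\Phi(\Omega)$ and $\Psi(\Omega)$ in $\mathbb{C}^2$ to rule out coincidences inside each pair, and on the disjointness of the variable groups $\{z_1,z_2\}$ and $\{z_3,z_4\}$ to rule out cross-coincidences. Once this rigidity is in place, translating the two surviving permutations into the explicit geometric forms (\ref{4.1}) and (\ref{4.2}) is routine.
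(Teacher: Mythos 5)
Your argument follows the same route the paper takes implicitly (the paper leaves Theorem \ref{53} as a consequence of the discussion preceding it, citing ``the above analysis''): invoke Theorem \ref{proper} for the proper map $\Upsilon_{\Phi,\Psi}$ to reduce to classifying local inverses, show each local inverse realizes the relation $F\circ\rho=(f_{\pi(1)},\dots,f_{\pi(4)})$ for a single permutation $\pi$, derive compatibility via (\ref{subset})/(\ref{comp}), and use the no-nontrivial-compatible-equation hypothesis to restrict $\pi$ to the identity and the block swap, which correspond to (\ref{4.1}) and (\ref{4.2}). The one place you add something the paper glosses over is the rigidity argument that $\pi_z$ is uniquely determined on a dense open locus and hence globally constant on the connected domain of $\rho$; this is a worthwhile clarification, and an analogous continuity/identity-theorem argument is what makes the final translation into forms (\ref{4.1})/(\ref{4.2}) (i.e.\ that $(\rho_1,\rho_2)$ really depends only on $(z_1,z_2)$, etc.) ``routine,'' as you note.
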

\noindent  Under conditions in Theorem \ref{53},
$\mathcal{V}^*(\Upsilon_{\Phi,\Psi},\Omega^2)$ is   trivial if and
only if  $\Phi= \Psi $ and  $\Phi$ is    biholomorphic.
\begin{cor} Under the conditions in Theorem \ref{53},
  $\mathcal{V}^*(\Upsilon_{\Phi,\Psi},\Omega^2)$ is
not $*$-isomorphic to $\mathcal{V}^*( \Phi,\Omega  )\otimes
\mathcal{V}^*( \Psi,\Omega)$.
\end{cor}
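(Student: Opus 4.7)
The plan is to mimic the dimension-counting argument from the proof of Theorem \ref{dim}. By Theorem \ref{53}, the von Neumann algebra $\mathcal{V}^*(\Upsilon_{\Phi,\Psi},\Omega^2)$ is generated by the operators $\mathcal{E}_{[\rho]}$, where $\rho$ ranges over equivalence classes of admissible local inverses of the form (\ref{4.1}) or (\ref{4.2}); by Theorem \ref{proper} these generators are linearly independent, so the dimension of the algebra equals the total number of such equivalence classes.

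First I would count the contribution of each type. A local inverse of the form (\ref{4.1}) has the shape $(\sigma_1(z_1,z_2),\sigma_2(z_3,z_4))$ with $\sigma_1\in\Phi^{-1}\circ\Phi$ and $\sigma_2\in\Psi^{-1}\circ\Psi$, so its equivalence class is determined by the pair $([\sigma_1],[\sigma_2])$, giving $n(\Phi,\Phi)\,n(\Psi,\Psi)$ classes. A local inverse of the form (\ref{4.2}) has the shape $(\eta_1(z_3,z_4),\eta_2(z_1,z_2))$ with $\eta_1\in\Phi^{-1}\circ\Psi$ and $\eta_2\in\Psi^{-1}\circ\Phi$; its class is determined by $([\eta_1],[\eta_2])$, and by Proposition \ref{number} (applied to the proper maps $\Phi,\Psi$ sharing the same image) this yields $n(\Phi,\Psi)\cdot n(\Psi,\Phi)=n(\Phi,\Psi)^2$ classes. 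Since (\ref{4.1}) preserves the block decomposition $(z_1,z_2)\leftrightarrow(z_3,z_4)$ while (\ref{4.2}) swaps the two blocks, no equivalence class from one family can coincide with one from the other. Consequently
$$\dim \mathcal{V}^*(\Upsilon_{\Phi,\Psi},\Omega^2)=n(\Phi,\Phi)\,n(\Psi,\Psi)+n(\Phi,\Psi)^2.$$

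On the other hand, Theorem \ref{proper} gives $\dim\mathcal{V}^*(\Phi,\Omega)=n(\Phi,\Phi)$ and $\dim\mathcal{V}^*(\Psi,\Omega)=n(\Psi,\Psi)$, hence
$$\dim\bigl(\mathcal{V}^*(\Phi,\Omega)\otimes\mathcal{V}^*(\Psi,\Omega)\bigr)=n(\Phi,\Phi)\,n(\Psi,\Psi).$$
The final step is to observe that $n(\Phi,\Psi)\geq 1$. Because $\Phi(\Omega)=\Psi(\Omega)$ and both maps are proper, Theorem \ref{variety} supplies regular values in the common image, so $\mathcal{S}_{\Phi,\Psi}$ is nonempty and carries at least one component. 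Therefore $n(\Phi,\Psi)^2\geq 1$, forcing a strict inequality between the two dimensions and ruling out any $*$-isomorphism.

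The one step that needs a small amount of care is the disjointness of the two families of equivalence classes: I would verify it by noting that any analytic continuation of a local inverse preserves its image-block structure on $\Omega^2=\Omega\times\Omega$, so a continuation of a map of type (\ref{4.1}) cannot send $(z_1,z_2,z_3,z_4)$ to something whose first two coordinates depend on $(z_3,z_4)$, and vice versa. Everything else is a direct assembly of Theorem \ref{53}, Proposition \ref{number}, and Theorem \ref{proper}; no new analytic input is required.
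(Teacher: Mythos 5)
Your proof is correct and is exactly the dimension-counting argument that the paper uses to prove Theorem~\ref{dim}, transported to the setting of Theorem~\ref{53}: count the equivalence classes of admissible local inverses coming from (\ref{4.1}) and (\ref{4.2}), observe the two families are disjoint, and note that the extra $n(\Phi,\Psi)^2\geq 1$ summand makes the dimension strictly larger than $\dim\bigl(\mathcal{V}^*(\Phi,\Omega)\otimes\mathcal{V}^*(\Psi,\Omega)\bigr)$. The paper leaves this corollary without an explicit proof, but the intended argument is precisely the one you gave.
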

  Usually $\Upsilon_{\Phi,\Psi}$   has no
nontrivial  compatible equation. The following provides such an
example.
\begin{exam} Let $\Omega=\mathbb{D}^2,$ and   define
  $$\Phi(z_1,z_2)=(z_1^2+z_2^3, z_1^2 z_2^3) ,\,(z_1,z_2) \in \Omega, $$ and
   $$\Psi(z_1,z_2)=(z_1+z_2,z_1z_2),\,(z_1,z_2) \in \Omega .$$
It is clear that $\Phi(\Omega)=\Psi(\Omega)$.

First, we state that there is no nontrivial compatable equations for $ \Upsilon_{\Phi,\Psi}$. For this, consider \[    \left\{\begin{array}{cccc}
  w_1^2+w_2^3=z_1^2+z_2^3 ,\\
  w_1^2w_2^3=z_3z_4,  \\
    w_3+w_4= z_3+z_4 ,\\
w_3w_4=z_1^2 z_2^3.
\end{array}\right.
\]
Define $\pi_1(z)=z_1$, $\pi_{1,2}(z)=(z_1,z_2)$ for $z\in \mathbb{C}^4$, and etc. By (\ref{comp}) we have
$$\pi_1 \overline{   (f_1,f_2,f_3,f_4)(\Omega^2)} =\pi_1 \overline{ (g_1 ,g_2 ,g_3 ,g_4   )(\Omega^2)} , $$
 \begin{equation}\pi_{1,2} \overline{   (f_1,f_2,f_3,f_4)(\Omega^2)} =\pi_{1,2} \overline{ (g_1 ,g_2 ,g_3 ,g_4   )(\Omega^2)} , \label{e12} \end{equation}
and so on. By (\ref{e12}) we would have
$$\overline{\{(w_1^2+w_2^3,w_1^2 w_2^3): w_1,w_2 \in \mathbb{D}\} } = 2\overline{ \mathbb{D}}\times \overline{\mathbb{D}}, $$
but  this is impossible.
Similarly,  any equations of the form
\[    \left\{\begin{array}{cccc}
  w_1^2+w_2^3=z_3+z_4 ,\\
  w_1^2w_2^3=z_1^2z_2^3  ,  \\
    \cdots  ,\\
 \cdots
\end{array}\right.
\]
is not compatible. The remaining cases can be handled in a similar way. Thus,  there is no nontrivial compatable equations for $ \Upsilon_{\Phi,\Psi}$.

As follows, we will determine all admissible local inverses of  $ \Upsilon_{\Phi,\Psi}$. For this,
it suffices to consider compatible equations. First write
  \[    \left\{\begin{array}{cccc}
  w_1^2+w_2^3=z_1^2+z_2^3 ,\\
  w_1^2w_2^3=z_1^2 z_2^3,  \\
    w_3+w_4= z_3+z_4 ,\\
w_3w_4=z_3z_4.
\end{array}\right.
\]
The corresponding local inverse is
$$(\rho(z_1,z_2), \sigma(z_3,z_4)),$$
where $\rho$ has the form
 $$ (\sqrt{z_2^3} , \sqrt[3]{z_1^2}) \quad  \mathrm{or} \quad ( \omega_1z_1,\omega_2z_2),
  \quad \mathrm{with} \quad \omega_1^2=\omega_2^3=1, $$
and
$$ \sigma(z_3,z_4)=(z_3,z_4) \quad  \mathrm{or} \quad \sigma(z_3,z_4)=(z_4,z_3). $$
It remains to deal with the equations
  \[    \left\{\begin{array}{cccc}
  w_1^2+w_2^3= z_3+z_4,\\
  w_1^2w_2^3=z_3z_4,  \\
    w_3+w_4= z_1^2+z_2^3 ,\\
w_3w_4=z_1^2 z_2^3.
\end{array}\right.
\]
Then we get the local inverses
$$(  \sqrt{z_3}, \sqrt[3]{z_4},  z_1^2, z_2^3),\, (  \sqrt{z_3}, \sqrt[3]{z_4}, z_2^3, z_1^2  )$$
and $$(   \sqrt[3]{z_4},\sqrt{z_3},  z_1^2, z_2^3),\, ( \sqrt[3]{z_4}, \sqrt{z_3},
 z_2^3, z_1^2  ) . $$
All the above local inverses are admissible. This shows that
$\mathcal{V}^*(\Upsilon_{\Phi,\Psi},\Omega^2)$ has rich structures.
\end{exam}
  Compared with Theorem \ref{53}, Theorem \ref{32} is of special interest.
   We now comes to its  proof.
\vskip2mm
\noindent \textbf{Proof of   Theorem \ref{32}.}  Suppose $\Phi$ and $\Psi $ are two holomorphic proper maps over $\Omega $ and both maps are
 holomorphic on $\overline{\Omega} $. We must determine all admissible local inverses of
$\Upsilon_{\Phi,\Psi}$.
Since $\Upsilon_{\Phi,\Psi}$ has no nontrivial  compatible equation. It reduces to  two cases of (\ref{compat}):
\begin{equation}(\Phi(w_1,w_2),\Psi(w_3,w_4))=(\Phi(z_1,z_2),\Psi(z_3,z_4))\label{fg}
 \end{equation}
and
\begin{equation}(\Phi(w_1,w_2),\Psi(w_3,w_4))=(\Psi(z_3,z_4),\Phi(z_1,z_2))\label{gf}
 \end{equation}
   If there were an admissible
 local solution for  (\ref{gf}), then by Remark \ref{eq} and (\ref{subset})
  $$\overline{\Phi(\Omega)\times \Psi(\Omega)} =\overline{ \Psi(\Omega)\times \Phi(\Omega)},$$
  forcing $ \overline{\Phi(\Omega)} =\overline{\Psi(\Omega)}$. This is a contradiction.
  Therefore, there is no admissible
 local solution for  (\ref{gf}).

It remains  to deal with  (\ref{fg}). It is clear that each admissible local solution $\eta$ of  (\ref{fg})
is exactly of the form  $(\rho(z_1,z_2),\sigma(z_3,z_4)),$  where
$\rho$ and $\sigma$ are  admissible local inverses of $\Phi$ and
$\Psi$ in  $\Omega$, respectively. Since $\Phi$ is a holomorphic
proper maps over $\Omega $, all of its local inverses are
admissible. Theorem \ref{ormain}  shows that
$\mathcal{V}^*(\Phi,\Omega)$ is generated by $\mathcal{E}_{\rho}$,
where $\rho$ run over all admissible local inverses of $\Phi$.
The same is true for $\mathcal{V}^*(\Psi,\Omega)$. Then by
setting
$$\mathcal{E}_{[\rho(z_1,z_2),\sigma(z_3,z_4)]} \mapsto \mathcal{E}_{[\rho]}\otimes  \mathcal{E}_{[\sigma]},$$
we obtain a $*$-isomorphism between  $\mathcal{V}^*(\Upsilon_{\Phi,\Psi}, \Omega^2)  $
and $\mathcal{V}^*(\Phi,\Omega)\otimes \mathcal{V}^*(\Psi,\Omega),$ finishing the proof of   Theorem \ref{32}.
 $\hfill \square $
\vskip2mm

 Both Theorems \ref{53} and \ref{32}   have natural generalizations
  to the case of $\Omega$ being a bounded domain in
 $\mathbb{C}^d.$  The proofs are just the same. Also, one can define a map  like $\Upsilon_{\Phi,\Psi}$
based on three or more holomorphic proper maps on $\Omega$,
 and there is no essential difference.

  Proposition \ref{ffgg} follows essentially from Theorem  \ref{32} and its proof is presented as below.
\vskip1mm
\noindent \textbf{Proof of   Proposition \ref{ffgg}.} Suppose   $f$ and $g$ are holomorphic over $\overline{\mathbb{D}}.$ Following the proof of Theorem \ref{32},
 one obtains a $*$-isomorphism between  $\mathcal{V}^*(\Phi_{f,g}, \mathbb{D}^2)  $
and $\mathcal{V}^*(f,\mathbb{D})\otimes
\mathcal{V}^*(g,\mathbb{D}).$

Since    $f$ is holomorphic over $\overline{\mathbb{D}},$  by
Thomson's theorem \cite{T1}   there is a finite Blaschke product
$B_f$ such that
$$\mathcal{V}^*(f,\mathbb{D})=\mathcal{V}^*(B_f,\mathbb{D}).$$
Recall that for each finite Blaschke product $B $,   $
\mathcal{V}^*(B ,\mathbb{D})$ is abelian \cite[Theorem 1.1]{DPW}.
Then so is  $\mathcal{V}^*(f,\mathbb{D})$, as well as
$\mathcal{V}^*(g,\mathbb{D})$. Therefore, the von Neumann algebra
$\mathcal{V}^*(f,\mathbb{D})\otimes \mathcal{V}^*(g,\mathbb{D}) $
is abelian, and hence $\mathcal{V}^*(\Phi_{f,g}, \mathbb{D}^2)  $ is
abelian.
 $\hfill \square$

\vskip2mm
 To end this section, we construct  an   example  which essentially arises
from elementary symmetric polynomials.
\begin{exam}Suppose   $\Omega$ is a domain in $\mathbb{C}^d$.
  Recall that the complete homogeneous symmetric polynomial  $h_k$ of degree $k$ in $d$ variables $z_1,\cdots,z_d$,
   is the sum of all monomials of total degree $k $ in the variables. Precisely,
   $$h_k(z)=\sum_{1\leq j_1\leq j_2 \leq \cdots \leq  j_k \leq d} z_{j_1}\cdots z_{j_k}.$$
   Write
    $$\Phi_{I}(z)=(z_1+\cdots+z_d, \sum_{j<k}z_jz_k,\cdots, \Pi_{j=1}^d z_j),$$
 $$\Phi_{II}(z)=(z_1+\cdots+z_d, \sum_{j =1}^d z_j^2 ,\cdots, \sum_{j=1}^d z_j^d),$$
  and $$\Phi_{III}(z)=(h_1(z), h_2(z),\cdots, h_d(z)).$$
Note that both $\Phi_{I}$ and $\Phi_{II}$ contain elementary
symmetric polynomials.  \label{symm} There are a polynomial $p$ and
$q$ such that
$$\Phi_{I}=p\circ \Phi_{II} \quad \mathrm{ and}  \quad \Phi_{II}=q\circ \Phi_{I}.$$
Thus, $M_{\Phi_{I}} $ and  $M_{\Phi_{II}}$ have the same joint
reducing subspaces, and then
$$ \mathcal{V}^*(\Phi_{I},\Omega)=\mathcal{V}^*(\Phi_{II},\Omega).$$
Since every symmetric polynomial can be expressed as a polynomial
expression
 in 
$h_1,\cdots,h_d$, by same reasoning we  get
$$ \mathcal{V}^*(\Phi_{I},\Omega)=\mathcal{V}^*(\Phi_{III},\Omega).$$
Furthermore,  if $\Omega$ is invariant under $S_d$, the permutation
group, then by the comments below \cite[Proposition 6.6]{HZ}  we
have
$\mathcal{V}^*(\Phi_{I},\Omega) \cong \mathcal{L}(S_d).$ 
For example,  put $\Omega= \mathbb{D}^d$ or $\mathbb{B}_d.$ In
particular, we have $$
\mathcal{V}^*(z_1+z_2,z_1z_2,\mathbb{D}^2)=\mathcal{V}^*(z_1+z_2,z_1^2+z_2^2,\mathbb{D}^2)\cong
\mathcal{L}(S_2).$$
  \end{exam}

By same reasoning in Example \ref{symm}, we get the following.
\begin{prop}   Suppose   $H:\Omega\to \mathbb{C}^d$ is a bounded
holomorphic function. Then  \label{unit}
$$\mathcal{V}^*(\Phi_I\circ H, \Omega)=\mathcal{V}^*(\Phi_{II}\circ H, \Omega)=\mathcal{V}^*(\Phi_{III}\circ H, \Omega).$$
  \end{prop}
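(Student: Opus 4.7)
\textbf{Proof proposal for Proposition \ref{unit}.} The plan is to transfer the argument of Example \ref{symm} verbatim, replacing the identity map on $\Omega$ by the holomorphic map $H$. The underlying input is purely algebraic: the elementary symmetric polynomials, the power sums, and the complete homogeneous symmetric polynomials in $d$ indeterminates are mutually polynomially expressible. Combining the fundamental theorem of symmetric polynomials with Newton's identities produces polynomial maps $p,q,r,s:\mathbb{C}^{d}\to\mathbb{C}^{d}$ satisfying
$$\Phi_{I}=p\circ\Phi_{II},\quad \Phi_{II}=q\circ\Phi_{I},\quad \Phi_{I}=r\circ\Phi_{III},\quad \Phi_{III}=s\circ\Phi_{I}.$$

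First I would post-compose each of these identities with $H$ on the right, obtaining
$$\Phi_{I}\circ H=p\circ(\Phi_{II}\circ H),\qquad \Phi_{II}\circ H=q\circ(\Phi_{I}\circ H),$$
and analogously for the pair $(\Phi_{I},\Phi_{III})$. Each component of $\Phi_{I}\circ H$ is then exhibited as a polynomial in the components of $\Phi_{II}\circ H$, and vice versa; boundedness of $H$ ensures that all the multiplication operators at play are bounded on $L_{a}^{2}(\Omega)$.

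Next I would invoke the elementary observation that if a bounded operator $T$ commutes with $A_{1},\ldots,A_{d}$ and with their adjoints, then $T$ commutes with $P(A_{1},\ldots,A_{d})$ and with $(P(A_{1},\ldots,A_{d}))^{*}$ for every polynomial $P\in\mathbb{C}[x_{1},\ldots,x_{d}]$, since commutation is preserved under products, sums, scalar multiples, and adjoints. Specialising to $A_{k}=M_{(\Phi_{II}\circ H)_{k}}$ and to the components of $p$, and using the identity $M_{P(\phi_{1},\ldots,\phi_{d})}=P(M_{\phi_{1}},\ldots,M_{\phi_{d}})$, I deduce $\mathcal{V}^{*}(\Phi_{II}\circ H,\Omega)\subseteq \mathcal{V}^{*}(\Phi_{I}\circ H,\Omega)$. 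The reverse inclusion follows symmetrically from $\Phi_{II}=q\circ\Phi_{I}$, and the identical two-step argument applied to $r,s$ yields the equality with $\mathcal{V}^{*}(\Phi_{III}\circ H,\Omega)$.

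There is essentially no obstacle: the whole proof is a mechanical polynomial functional calculus argument supported by three classical identities from the theory of symmetric functions. The only point worth isolating explicitly is the mutual polynomial expressibility of the three families; everything else is a direct repetition of the reasoning already sketched in Example \ref{symm}.
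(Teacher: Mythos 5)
Your proposal is correct and follows essentially the same route the paper takes: the paper's ``proof'' of Proposition~\ref{unit} simply says ``by same reasoning in Example~\ref{symm},'' and Example~\ref{symm} rests precisely on the two-sided polynomial expressibility $\Phi_I = p\circ\Phi_{II}$, $\Phi_{II}=q\circ\Phi_I$ (and the analogous pair for $\Phi_{III}$), from which equality of the joint reducing subspaces and hence of the von Neumann algebras follows. Your step of post-composing with $H$ and running the polynomial functional-calculus argument directly on commutants is exactly the intended content, just spelled out more explicitly than the paper bothers to.
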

 In defining the map $\Upsilon$, we have made use of the map $\Phi_{II}.$
Proposition \ref{unit} shows that in our theorems and propositions, one  can just replace $\Phi_{II} $ with
$\Phi_{I} $ or $\Phi_{III} $ without changing anything.

\vskip3mm \noindent{Center of Mathematics, Chongqing University,
Chongqing,  401331, China,
   E-mail: chongqingmath@gmail.com

\noindent    Department of Mathematics, East China University of
Science and Technology, Shanghai, 200237, China, E-mail:
hshuang@ecust.edu.cn
 }
\end{document}